\numberwithin{equation}{section}
\newtheorem{theorem}{Theorem}[section]
\newtheorem{corollary}[theorem]{Corollary}
\newtheorem{conjecture}[theorem]{Conjecture}
\newtheorem{lemma}[theorem]{Lemma}
\theoremstyle{definition}
\newtheorem{definition}[theorem]{Definition}
\newcommand{\emb}[1]{\mathbf{#1}}
\DeclareMathOperator{\skel}{Skel}
\DeclareMathOperator\lk{\mathrm{lk}}
\DeclareMathOperator\st{\mathrm{st}}
\DeclareMathOperator{\intr}{\mathrm{int}}
\DeclareMathOperator{\conv}{\mathrm{conv}}
\DeclareMathOperator{\Aff}{\mathrm{aff}}
\DeclareMathOperator{\supp}{\mathrm{supp}}
\DeclareMathOperator{\sign}{\mathrm{sign}}
\newcommand{\R}{{\mathbb R}}
\newcommand{\M}{{\mathcal M}}
\newcommand{\V}{{\mathcal V}}
\newcommand{\Stress}{\mathcal S}
\title{Reconstructing simplicial polytopes from their graphs and affine $2$-stresses}
\author{
	Isabella Novik\thanks{Research of IN is partially\textsl{} supported by NSF grants DMS-1664865 and DMS-1953815, and by Robert R.~\&  Elaine F.~Phelps Professorship in Mathematics. }\\
	\small Department of Mathematics\\[-0.8ex]
	\small University of Washington\\[-0.8ex]
	\small Seattle, WA 98195-4350, USA\\[-0.8ex]
	\small \texttt{novik@uw.edu}
	\and 
	Hailun Zheng\thanks{Research of HZ is partially supported by a postdoctoral fellowship from ERC grant 716424 - CASe.}\\
	\small Department of Mathematical Sciences\\[-0.8ex]
	\small University of Copenhagen\\[-0.8ex]
	\small Universitesparken 5, 2100 Copenhagen, Denmark \\[-0.8ex]
	\small \texttt{hz@math.ku.dk}
}
\begin{document}
	\maketitle
	\begin{abstract} A conjecture of Kalai from 1994 posits that for an arbitrary $2\leq k\leq \lfloor d/2 \rfloor$, the combinatorial type of a simplicial $d$-polytope $P$ is uniquely determined  by the $(k-1)$-skeleton of $P$ (given as an abstract simplicial complex) together with the space of affine $k$-stresses on $P$. We establish the first non-trivial case of this conjecture, namely, the case of $k=2$. We also prove that for a general $k$, Kalai's conjecture holds for the class of $k$-neighborly polytopes. 
	\end{abstract}
	
\section{Introduction}
What partial information about a convex $d$-polytope $P$ is enough to uniquely determine the combinatorial type of $P$? For general polytopes, a result of Gr\"unbaum \cite[Chapter 12]{Gru-book} shows that to reconstruct the face lattice of $P$ we need to know the $(d-2)$-skeleton of $P$. At the same time, for certain classes of polytopes, knowing the graph alone already suffices to determine the combinatorial type. Examples include the class of all simple polytopes (see \cite{Blind-Mani} and \cite{Kalai-simple}) as well as the class of all zonotopes \cite{BjornerEdelmanZieger}.

In the class of simplicial polytopes, neither the graph nor even the $(\lfloor d/2\rfloor-1)$-skeleton provides enough information. Indeed, while any two $\lfloor d/2\rfloor$-neighborly $d$-polytopes on $n$ vertices have isomorphic $(\lfloor d/2\rfloor-1)$-skeleta, there are  $2^{\Theta(n\log n)}$ distinct combinatorial types of such polytopes  \cite{Padrol-13, Shemer}. However, a result of Perles (unpublished) and Dancis \cite{Dancis} asserts that the $\lfloor d/2\rfloor$-skeleton of a simplicial $d$-polytope $P$ does determine the entire face lattice of $P$.  
Another piece of information that allows one to reconstruct the face lattice of $P$ is the space of affine dependencies of vertices of $P$. This observation is at the heart of the theory of Gale diagrams developed by Perles \cite[Chapter 6]{Ziegler}. 

To relate these two very different types of partial information to each other, it is worth pointing out that the affine dependencies of vertices of $P$ are precisely the affine $1$-stresses on $P$, while the space of affine $(\lfloor d/2\rfloor+1)$-stresses of a simplicial $d$-polytope $P$ is trivial. (This latter fact is a consequence of the celebrated $g$-theorem of Stanley \cite{Stanley80} and McMullen \cite{McMullen96} or more precisely of the Strong Lefschetz property of $P$ stated in the language of stresses \cite{Lee94, Lee96}.) In other words, the two reconstruction results mentioned in the previous paragraph are respectively the $k=\lfloor d/2\rfloor+1$ and the $k=1$ cases of the following conjecture of Kalai \cite[Conjecture 7]{Kalai-survey} that can be regarded as a natural conjectural extension of the basic property of Gale diagrams.

\begin{conjecture}  \label{conj:Kalai}
	Let $P$ be a simplicial $d$-polytope and let $1\leq k \leq \lfloor d/2\rfloor+1$. Then the $(k-1)$-skeleton of $P$ (given as an abstract simplicial complex) and the space of (squarefree parts of) affine $k$-stresses of $P$ uniquely determine the combinatorial type of $P$.
\end{conjecture}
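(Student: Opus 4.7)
The plan is to prove Conjecture~\ref{conj:Kalai} by induction on $\lfloor d/2\rfloor+1-k$, reducing the interior range $2\le k\le \lfloor d/2\rfloor$ to two already--settled boundary cases. When $k=1$, affine $1$-stresses are exactly affine dependencies of the vertex set, so the statement is Perles' classical Gale--diagram reconstruction. When $k=\lfloor d/2\rfloor+1$, the Strong Lefschetz property of $P$ forces $\Stress_k(P)=0$, and the conjecture collapses to the Perles--Dancis theorem that the $\lfloor d/2\rfloor$-skeleton of a simplicial $d$-polytope determines its face lattice.

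The inductive step is the following bootstrap lemma: from the pair $\bigl(\skel_{k-1}(P),\Stress_k(P)\bigr)$ with $2\le k\le\lfloor d/2\rfloor$, one can recover the pair $\bigl(\skel_k(P),\Stress_{k+1}(P)\bigr)$. Iterating this bootstrap $\lfloor d/2\rfloor+1-k$ times lands in the $k=\lfloor d/2\rfloor+1$ boundary case. I would split the bootstrap into two sub-steps. Sub-step~(a): recover $\skel_k(P)$ by defining, for each candidate $k$-face $\sigma$ whose boundary already lies in $\skel_{k-1}(P)$, a linear map $\Stress_k(P)\to \R^{\partial\sigma}$ whose image satisfies an explicit compatibility condition if and only if $\sigma$ is a genuine face of $P$. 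The guiding intuition is that $\sigma\in P$ forces restrictions of $k$-stresses to $\partial\sigma$ to lie inside the stress space of $\lk_\sigma(P)$, a proper linear constraint that should fail when $\sigma$ is only a ``combinatorial shadow.'' Sub-step~(b): compute $\Stress_{k+1}(P)$ from $\skel_k(P)$ and $\Stress_k(P)$ via Lee's identification of higher affine stresses as kernels of explicit linear maps in the Stanley--Reisner calculus, whose matrix entries depend only on the $k$-skeleton and on the coordinates of $\Stress_k(P)$.

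The main obstacle will be sub-step~(a). For $k=2$, classical rigidity theory of Whiteley and Kalai supplies exactly the required face-detection criterion: a pair of vertices is an edge iff it is ``covered'' by a non-trivial $2$-stress. For general $k$ no such higher-rigidity framework is available in the literature, so the key new input must be a stress-based criterion on links of candidate simplices, together with a sharpness argument ruling out accidental coincidences at non-faces. Establishing this sharpness will presumably require both genericity of the vertex coordinates and a descent of the Strong Lefschetz property from $P$ to the links of its faces, so that stress spaces on links can be identified cleanly as images of restriction maps from $\Stress_k(P)$. This is, in my view, the conjectural heart of the statement and the main reason the conjecture remains open for $k\ge 3$.
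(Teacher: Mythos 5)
Keep in mind that the statement you set out to prove is a conjecture that the paper leaves open: the authors establish it only for $k=2$ and for $k$-neighborly polytopes, and you are rightly candid that your sub-step~(a) is the ``conjectural heart.'' But your bootstrap also has a second, more concrete gap in sub-step~(b). The entries of the $(k+1)$-rigidity matrix $R_{k+1}(\partial P,p)$ are the altitude vectors $\pi_{F,G}$, which depend on the actual vertex coordinates $p$; these are not determined by $\skel_k(\partial P)$ together with $\Stress_k(P)$ viewed, per the conjecture's hypothesis, as the subspace of squarefree parts inside $\R^{f_{k-1}(\partial P)}$. Even the intrinsic polynomial-ring characterization of $\Stress_{k+1}$ (a degree-$(k+1)$ form $\lambda$ is a stress iff every $\partial_{x_v}\lambda$ lies in $\Stress_k$ and every monomial of $\lambda$ is supported on a face) requires working with the full polynomial representatives, and passing from a squarefree part to the polynomial stress it uniquely determines again requires knowing $p$. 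So the bootstrap cannot run even once without recovering geometric information that the hypothesis does not supply.

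Independently of the gap, your route differs structurally from the paper's. Rather than climbing one skeleton dimension at a time while regenerating stress spaces, the paper makes a single jump: it proposes to read off every missing face of $\partial P$ of dimension between $k$ and $d-k$ directly from the set $\V_k(P)$ of sign vectors of $k$-stresses (Conjectures~\ref{conj1}, \ref{conj2} and \ref{conj3}), thereby recovering $\skel_{d-k}(\partial P)\supseteq\skel_{\lfloor d/2\rfloor}(\partial P)$, and then applies Perles--Dancis once. Detecting a high-dimensional missing face is reduced to detecting a missing $(k-1)$-face in a quotient polytope via the cone lemma (Corollary~\ref{cor: cone lemma}), and the $k=2$ case of the resulting detection criterion is proved by a Dehn-style sign count on a $3$-polytope, Whiteley's infinitesimal rigidity theorem, and Balinski-type connectivity arguments. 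This sidesteps any need to manufacture $\Stress_{k+1},\Stress_{k+2},\dots$ from the given data, and uses only $k$-stress sign data throughout. Your sub-step~(a) is the right kind of question to ask, but without the sign-pattern formulation and the quotient/link reduction, it is hard to see how even the $k=2$ case would go through in your framework.
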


All other cases of Conjecture \ref{conj:Kalai} are open at present. The goal of this note is to verify the case  of $k=2$ of this conjecture as well as to prove it for the class of $k$-neighborly polytopes for an arbitrary $k$. In fact, in the complete analogy with the $k=1$ case, we prove that in these two cases, to reconstruct the combinatorial type of $P$, it is enough to know the $(k-1)$-skeleton of $P$ and the set of {\em sign vectors} of affine $k$-stresses on $P$.
In the case of $k=2$, the main ingredients of our proof are basic facts about affine $2$-stresses, such as the cone and gluing lemmas (see \cite{AsimowRothII, Lee-notes}); Whiteley's result asserting that for $d\geq 3$, all simplicial $d$-polytopes are infinitesimally rigid in $\R^d$ \cite{Whiteley-84}; a simple extension of Dehn's lemma that might be of interest on its own; and Balinski's theorem \cite{Balinski}, \cite[Section 3.5]{Ziegler}. Along the way, we state several other conjectures about the set of affine $k$-stresses that, if true, would imply Conjecture \ref{conj:Kalai} in full generality. 

The structure of the rest of this note is as follows. In Section 2 we review several basic definitions related to polytopes and simplicial complexes  as well as introduce some notation. In Section 3, which is also mostly a background section, we discuss a few important results on affine stresses and infinitesimal rigidity. In Section~4, we propose three conjectures of increasing strength all of which imply Conjecture \ref{conj:Kalai}. Then, in Section 5, we prove Conjecture \ref{conj:Kalai} in the two cases described above by verifying one of these stronger conjectures, see Theorems \ref{thm:k=2} and \ref{thm:k-neighb}.

\section{Basics on polytopes and simplicial complexes} 

In this section we collect some basic definitions and results pertaining to simplicial polytopes and simplicial complexes. 

A {\em polytope} $P \subseteq \R^d$ is the convex hull of a finite set of points in $\R^d$. Each hyperplane $H$ in $\R^d$ determines two closed half-spaces of $\R^d$, which we usually denote by $H^+$ and $H^-$. We say that $H$ is a {\em supporting hyperplane} of $P$ if $P$ is contained in one of these two half-spaces and $H\cap P$ is nonempty; the intersection $F= H\cap P$ is then called a {\em proper} face of $P$. We sometimes also refer to the empty set and $P$ itself as {\em non-proper} faces of $P$. The {\em dimension} of a face $F$ is the dimension of the affine span of $F$. In particular, $P$ is a {\em $d$-polytope} if $\dim P=d$. The faces of a $d$-polytope $P$ of dimensions $0, 1$ and $d-1$ are called {\em vertices, edges}, and {\em facets}, respectively. 
By passing from $\R^d$ to the affine span of $P$, we can always assume that $P \subseteq \R^d$ is a full-dimensional polytope. 

An important example of a polytope is a {\em {geometric} $d$-simplex}. It is defined as the convex hull of a set of $d+1$ affinely independent points. Another important family of polytopes is that of $k$-neighborly polytopes: a polytope $P$ is {\em $k$-neighborly} if every $k$-subset of the vertex set of $P$ spans a face of $P$.

An (abstract) {\em simplicial complex} $\Delta$ with vertex set $V=V(\Delta)$ is a {non-empty} collection of subsets of $V$ that is closed under inclusion. The elements of $\Delta$ are called {\em faces} of $\Delta$. A face $F$ of $\Delta$ is an {\em $i$-face} or a face of {\em dimension $i$} if $|F|=i+1$. The {\em $i$-skeleton} of $\Delta$, $\skel_i(\Delta)$, is the set of all faces of $\Delta$ of dimension at most $i$. We often refer to the $1$-skeleton of $\Delta$ as the {\em graph} of $\Delta$ and denote it by $G(\Delta)$. 

The (abstract) $d$-simplex on the vertex set $V$ of size $d+1$ is the complex $\overline{V}:=\{\tau \ : \ \tau\subseteq V\}$. The boundary complex of $\overline{V}$, $\partial\overline{V}$, consists of all faces of $\overline{V}$ but $V$ itself.

A set $F\subseteq V$ is a {\em missing face} of $\Delta$ if $F$ is not a face of $\Delta$, but every proper subset $\sigma$ of $F$ is a face of $\Delta$. The {\em dimension} of a missing face $F$ is defined as $|F|-1$. The simplices are the only simplicial complexes that have no missing faces. The importance of missing faces of $\Delta$ is that they uniquely determine $\Delta$:  $F\subseteq V$ is a face of $\Delta$ if and only if no missing face of $\Delta$ is a subset of $F$. 

In this paper we only work with {\em simplicial} polytopes, that is, polytopes all of whose facets are geometric simplices. A simplicial $d$-polytope $P$ gives rise to an abstract simplicial complex $\partial P$ called the {\em boundary complex} of $P$: the faces of $\partial P$ consist of the empty set and the vertex sets of proper faces of $P$. We use the following convention: if $P$ is a $d$-polytope with $n$ vertices, we think of $\partial P$ as a simplicial complex on the vertex set $V(\partial P)=\{1,2,\ldots,n \}$ (or on any $n$ symbols), and we let $p:V(\partial P)\to\R^d$ be the map that takes each vertex $v$ to its position vector $p(v)$, so the vertices of $P$ are given by $p(v)$, $v\in V(\partial P)$. If $W$ is a subset of $V(\partial P)$ and $p$ is fixed or understood from context, we write  $\conv(W):=\conv(p(v) : v\in W)$ and $\Aff(W):=\Aff(p(v): v\in W)$. In particular, if $W$ is a face of $\partial P$, then $\conv(W)$ is a face of $P$.
To simplify notation, for a face that is a vertex or an edge, we sometimes write $v$ and $uv$, instead of $\{v\}$ and $\{u,v\}$, respectively, and we use $[a,b]$ as a shorthand for $\conv(\{a,b\})$.  

Let $F$ be a face of $\partial P$. The {\em star of $F$} and the  {\em link of $F$ in $\partial P$} are the following subcomplexes of $\partial P$:
$$\st(F)=\st_{\partial P}(F):=\{\sigma\in\partial P \ : \  \sigma\cup F\in\partial P\}\text{ and }\lk(F)=\lk_{\partial P}(F):= \{\sigma\in \st(F) \ : \ \sigma\cap F=\emptyset\}.$$ The link of $F$ in $\partial P$ is the boundary complex of a polytope. When $F=\{u\}$ is a vertex, one such polytope is obtained by {intersecting $P$ with a hyperplane that strictly separates $p(u)$ from the other vertices}; this polytope is called a {\em vertex figure of $P$ at $u$}  or a {\em quotient polytope of $P$ by $u$}. In general, a quotient of $P$ by face $F$ is obtained by iteratively taking vertex figures of polytopes at the vertices in $F$. Continuing with our convention from the previous paragraph, for any choice $Q$ of a quotient polytope of $P$ by $F$, the abstract simplicial complex $\partial Q$ coincides with $\lk_{\partial P}(F)$; in particular, the vertex set of $\partial Q$ is a subset of $V(\partial P)\setminus F$. The vertices of $Q$ itself are then of the form $q(v)$ for $v\in V(\partial Q)$ and an appropriate map $q$. This convention is handy for Corollary \ref{cor: cone lemma} and its applications.

If $\Gamma$ and $\Delta$ are simplicial complexes on disjoint vertex sets, their \textit{join} is the simplicial complex $\Gamma*\Delta = \{\sigma \cup \tau \ : \ \sigma \in \Gamma \text{ and } \tau \in \Delta\}$. When $\Gamma = \overline{u}$ consists of a single vertex, we write $u * \Delta$ to denote the \textit{cone} over $\Delta$ with apex $u$. Thus, $\st_{\partial P}(v)=v * \lk_{\partial P}(v)$.

Finally, for a simplical $d$-polytope or a $(d-1)$-dimensional simplicial complex $\Delta$, we define $f_i(\Delta)$ as the number of $i$-dimensional faces of $\Delta$, where $-1\leq i\leq d-1$. We also let $g_0(\Delta)=1$, $g_i(\Delta)=\sum_{k=0}^{i} (-1)^{i-k}\binom{d-k+1}{i-k}f_{k-1}(\Delta)$ for $1 \leq i\leq \lceil d/2 \rceil$, and $g_i(\Delta)=0$ for all other values of $i$.

\section{Basics on stress spaces and the rigidity theory of frameworks}
Here we review several notions and results related to (higher-dimensional) stresses and infinitesimal rigidity. For more details we refer the reader to \cite{Lee94,Lee96} and \cite{Tay-et-al-I, Tay-et-al}.
\subsection{Affine stresses on simplicial complexes}
Let $\Delta$ be a simplicial complex on the vertex set $V=V(\Delta)$.  A map $p: V(\Delta) \rightarrow \mathbb{R}^d$ is called a \textit{$d$-embedding} of $\Delta$. If $\Delta$ is a graph, then a $d$-embedding of $\Delta$ is usually called a {\em $d$-framework} or a {\em framework in $\R^d$}. In what follows, we fix an embedding $p$. 
We always assume that $\Aff(p(v) : v\in V)=\R^d$ and that for every face $F$ of $\Delta$, the points $\{p(v) : v\in F\}$ are affinely independent.

Let $X = \{x_v : v\in V\}$ be a set of variables and let $\R[X]$ be the polynomial ring over the real numbers in variables $X$. 
Each variable $x_v$ acts on $\R[X]$ by $\frac{\partial}{\partial{x_v}}$; for brevity, we will denote this operator by $\partial_{x_v}$. More generally, if $\ell(X)=\sum_{v\in V} \ell_v x_v$ is a linear form in $\R[X]$, then we define \begin{equation*}
	\begin{split}
		\partial_{\ell(X)} : \R[X]&\to\R[X], \\
		q &\mapsto \sum_{v\in V}\ell_v\cdot\partial_{x_v}q=\sum_{v\in V}\ell_v\frac{\partial q}{\partial{x_v}}.
	\end{split}
\end{equation*}

Given a $d$-embedding $p$ of $\Delta$, consider the $(d+1)\times |V|$ matrix whose columns are labeled by the vertices of $\Delta$: the column corresponding to $v$ consists of the vector $p(v)$ augmented by a one in the last position.  The $i$-th row of this matrix, $\boldsymbol\theta_i=[\theta_{iv}]_{v\in V}$, gives rise to a linear form $\theta_i=\sum_{v\in V}\theta_{iv} x_v$. In particular, $\theta_{d+1}=\sum_{v\in V} x_v$. We denote by $\Theta(p)$ or simply by $\Theta$ the sequence $(\theta_1,\ldots,\theta_d, \theta_{d+1})$  of these forms.

For a monomial $\mu\in \R[X]$, the {\em support} of $\mu$ is $\supp(\mu)=\{v\in V: \;x_v|\mu\}$.
A homogeneous polynomial $\lambda=\lambda(X)=\sum_\mu \lambda_\mu \mu\in\R[X]$ of degree $k$ is called an {\em affine  $k$-stress} (or simply a $k$-stress) on $(\Delta, p)$ if it satisfies the following conditions:
\begin{itemize}
	\item Every (non-zero) term $\lambda_\mu \mu$ of $\lambda$ is supported on a face of $\Delta$: $\supp(\mu)\in\Delta$, and
	\item $\partial_{\theta_i}\lambda=0$ for all $i=1,\ldots, d+1$.
\end{itemize}
The set of affine $k$-stresses on $\Delta$ forms a vector space, denoted $\Stress_k(\Delta,p)$ or $\Stress_k(\Delta)$ if $p$ is fixed or understood from context.

Abusing notation, we write $\lambda_F$ instead of $\lambda_\mu$ when $\mu$ is a squarefree monomial with $\supp(\mu)=F$. Note that a polynomial  $\lambda=\sum_{v\in V} \lambda_v x_v$ is an affine $1$-stress if and only if $(\lambda_v)_{v\in V}$ is an affine dependence of points $(p(v) : v\in V)$. More generally, for $k\geq 2$, an affine $k$-stress $\lambda$ is uniquely determined by its squarefree part $(\lambda_G)_{G\in \Delta}$ \cite[Theorems 9 and 11]{Lee96}. Furthermore, the squarefree part has a particularly nice geometric interpretation \cite[Theorem 10]{Lee96}. We summarize these results as follows: For a pair of faces $F\subset G\in \Delta$, where $|F|=k-1$ and $|G|=k$, we let $v_{F, G}$ be the unique vertex of $G$ that is not in $F$. Denote by  $\pi_{F, G}$ the {\em altitude vector} joining the projection of $p(v_{F, G})$ onto the affine hull of $p(F)$ to the point $p(v_{F, G})$. (For instance, when $k=2$, $\pi_{a, ab}$ is simply $p(b)-p(a)$.) Also denote by $\emb{0}$ the zero-vector in $\R^d$.

\begin{theorem}
	Let $k\geq 2$, let $\Delta$ be a simplicial complex, and let $p$ be a $d$-embedding of $\Delta$. 
	\begin{enumerate}
	\item If $\lambda\in \R[X]$ is an affine $k$-stress on $(\Delta,p)$, then for every $(k-2)$-face $F$ of $\Delta$, $\lambda$ satisfies the following {\em balancing condition at $F$}:
	\begin{equation} \label{bal-cond} \sum_{G:\; F\subset G \in \Delta, \; |G|=k} \lambda_G \; \pi_{F, G}=\mathbf{0}.\end{equation} 
	\item Every collection $(\lambda_G)_{G \in \Delta, \;|G|=k}$ of real numbers that satisfies these conditions 
	determines the squarefree part of an affine $k$-stress on $(\Delta,p)$; furthermore, such a stress is unique.
	\end{enumerate}
\end{theorem}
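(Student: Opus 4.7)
My plan is to derive both parts from iterated differentiation of the stress equations combined with an orthogonal-projection trick onto the complement of $\Aff(F)$.

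For part~(1), I would fix a $(k-2)$-face $F$ of $\Delta$ (so $|F|=k-1$) and apply the commuting operator $\partial_{x_F}:=\prod_{v\in F}\partial_{x_v}$ to each identity $\partial_{\theta_i}\lambda=0$. Since partial derivatives commute, $\partial_{\theta_i}(\partial_{x_F}\lambda)=0$ for every $i$. The polynomial $\partial_{x_F}\lambda$ is homogeneous of degree one, and a monomial $\mu$ appearing in $\lambda$ contributes precisely when $F\subseteq\supp(\mu)$: either $\supp(\mu)$ is a $k$-face $G\supset F$ (squarefree, contributing $\lambda_G x_{v_{F,G}}$), or $\supp(\mu)=F$ (a non-squarefree contribution landing on some $x_v$ with $v\in F$). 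Writing $\partial_{x_F}\lambda=\sum_v c_v x_v$, the equations $\partial_{\theta_i}(\partial_{x_F}\lambda)=0$ for $i=1,\dots,d+1$ assert exactly that $(c_v)_v$ is an affine dependence of the points $\{p(v)\}_v$. Fixing an origin $O\in\Aff(F)$ and projecting the identity $\sum_v c_v(p(v)-O)=\mathbf{0}$ orthogonally onto the complement of $\Aff(F)-O$ annihilates every $v\in F$ contribution (since $p(v)-O$ lies in $\Aff(F)-O$) and sends each $p(v_{F,G})-O$ to the altitude vector $\pi_{F,G}$, yielding (\ref{bal-cond}).

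For part~(2), the goal is to show that the map $\lambda\mapsto(\lambda_G)_{G\in\Delta,\,|G|=k}$ from $\Stress_k(\Delta,p)$ to the space of balanced squarefree collections is a linear isomorphism. Injectivity: if the squarefree part of a stress $\lambda$ vanishes and $\lambda\ne0$, let $s:=\max\{|\supp(\mu)|:\lambda_\mu\ne0\}\le k-1$; for a face $F$ of size $s$ with a nontrivial contribution, $\partial_{x_F}\lambda$ is a homogeneous polynomial of degree $k-s$ whose coefficients come solely from non-squarefree monomials supported on $F$, and the vanishing of $\partial_{\theta_i}$ on it, combined with affine independence of $p(F)$, forces those coefficients to vanish, a contradiction. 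Surjectivity: lift a balanced collection to a stress by solving $\partial_{\theta_i}\lambda=0$ for the non-squarefree coefficients in order of decreasing support size; at the top level $|\supp(\mu)|=k-1$ the balancing conditions are precisely the solvability requirement (by reading the projection computation of part~(1) in reverse), and at lower levels solvability follows from affine independence on the smaller support sets.

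The main obstacle will be the surjectivity step, where one must track carefully how the linear systems at each support size remain consistent under the already-fixed choices from higher levels. A cleaner conceptual shortcut, due to Lee \cite{Lee96}, identifies $\Stress_k(\Delta,p)$ with the degree-$k$ part of the Macaulay inverse system of the Artinian quotient $\R[X]/(J_\Delta+(\theta_1,\dots,\theta_{d+1}))$, where $J_\Delta$ is generated by monomials whose supports are not faces of $\Delta$; from this viewpoint (\ref{bal-cond}) is the only obstruction to lifting a squarefree collection to a full stress, and both bijectivity statements follow at once.
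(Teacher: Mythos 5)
The paper does not prove this theorem at all: it is stated as a summary of Lee's results and cited directly as \cite[Theorems 9, 10, and 11]{Lee96}. So there is no paper proof to compare against; what you have written is a reconstruction of Lee's argument from scratch.

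Your argument for part~(1) is correct and is essentially the standard one. Applying $\partial_{x_F}=\prod_{v\in F}\partial_{x_v}$ (a product of $|F|=k-1$ first-order derivatives) to the degree-$k$ polynomial $\lambda$ yields a linear form $\sum_v c_v x_v$, and since the only monomials $\mu$ with $F\subseteq\supp(\mu)$ and $\supp(\mu)\in\Delta$ have $\supp(\mu)$ equal to $F$ or to a $k$-face $G\supset F$, the coefficient $c_{v_{F,G}}$ is exactly $\lambda_G$, while for $v\in F$ the coefficients come from the unique monomial $x_v\prod_{u\in F}x_u$. Commuting $\partial_{x_F}$ past $\partial_{\theta_i}$ gives that $(c_v)_v$ is an affine dependence, and your projection onto the orthogonal complement of $\Aff(p(F))-O$ correctly kills the $v\in F$ terms and turns each $p(v_{F,G})-O$ into $\pi_{F,G}$. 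Your injectivity argument is also correct: if $F=\supp(\mu)$ is a maximal support with $\lambda_\mu\neq 0$ and $|F|=s\leq k-1$, then $\partial_{x_F}\lambda$ is a nonzero polynomial of degree $k-s\geq 1$ purely in the variables $\{x_v : v\in F\}$, and since the columns $\bigl(\begin{smallmatrix}p(v)\\1\end{smallmatrix}\bigr)$, $v\in F$, are linearly independent, the restricted forms $\sum_{v\in F}\theta_{iv}x_v$ span all linear forms in those $s$ variables, forcing all partials $\partial_{x_v}(\partial_{x_F}\lambda)$, $v\in F$, to vanish — hence $\partial_{x_F}\lambda=0$, a contradiction.

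The genuine gap is in your surjectivity step. Reading part~(1) in reverse shows that the balancing conditions at $(k-2)$-faces $F$ are exactly the consistency conditions needed to solve for the coefficients $\lambda_\mu$ with $|\supp(\mu)|=k-1$ (the appropriate right-hand side lies in the column span of $\bigl[\bigl(\begin{smallmatrix}p(v)\\1\end{smallmatrix}\bigr)\bigr]_{v\in F}$). But at the next level down, for a face $F$ with $|F|=j<k-1$, you now have \emph{several} degree-$(k-1)$ monomials $\nu$ with $\supp(\nu)=F$, and the resulting $(d+1)\cdot\binom{k-2}{j-1}$ equations in the $\binom{k-1}{j-1}$ unknowns $\{\lambda_\mu : \supp(\mu)=F\}$ are heavily coupled, with a nontrivial right-hand side inherited from the level-$(j+1)$ choices. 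Your one-line claim that ``solvability follows from affine independence on the smaller support sets'' does not show that these coupled systems remain consistent; that is precisely the content of Lee's Theorem~11 and requires more than affine independence — one must show that the only obstruction at any level is the balancing condition, not just that the coefficient matrix has full rank. You do flag this as the main obstacle and fall back on the Macaulay inverse-system picture from \cite{Lee96}, which is indeed the right way to close the gap, but as written the inductive surjectivity argument is incomplete.
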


 For $k\geq 2$, the above theorem allows us to identify $\Stress_k(\Delta,p)$ with the kernel of a certain matrix $R_k(\Delta,p)$ called the {\em $k$-rigidity matrix} of $(\Delta,p)$.
The matrix $R_k(\Delta,p)$ is a $df_{k-2}(\Delta) \times f_{k-1}(\Delta)$ matrix; its columns are labeled by the $(k-1)$-faces of $\Delta$; its rows occur in blocks of size $d$ with each block labeled by a $(k-2)$-face of $\Delta$. The entry in the $(F,G)$-position is the altitude vector $\pi_{F,G}$ if $F\subset G$ and it is the zero vector otherwise.

We now discuss how the stress spaces of a complex $\Delta$ and the cone over $\Delta$ are related to each other. Assume that the vertex set of $\Delta$ is $[n]:=\{1,2,\ldots,n\}$, and let $\Gamma=0*\Delta$ be the cone over $\Delta$ with apex $0$. Let $p$ be a $d$-embedding of $\Gamma$ and let $p'$ be a $(d-1)$-embedding of $\Delta$. It is known that for appropriate choices of $p$ and $p'$, the stress spaces $\Stress_k(\Delta,p')$ and $\Stress_k(\Gamma, p)$ are isomorphic, see \cite[Theorem 7]{Lee96}. We will need the following more precise version of this result that is similar in spirit to  \cite[Theorem 7]{Lee96} and \cite[Claim 1 of Thm.~6.19]{Lee-notes}. We sketch the proof for completeness. 

\begin{lemma} \label{cone-lemma1}
Let $\Delta$ be a simplicial complex with $V(\Delta)=[n]$ and let $\Gamma=0*\Delta$. Consider a $d$-embedding $p$ of $\Gamma$ such that $p(0)=\emb{0}$ and for all $i\in [n]$, $p(i)=\left[ \begin{array}{ll} \emb{v}_i \\ a_i\end{array}\right]$ where $\emb{v}_i \in \R^{d-1}$ and $a_i\in \R\backslash\{0\}$. Define the $(d-1)$-embedding $p'$ of $\Delta$ by $p'(i)=\frac{1}{a_i} \emb{v}_i \in\R^{d-1}$ for all $i\in [n]$. If $\lambda=\lambda(x_0,x_1,\ldots,x_n)$ is a homogenous polynomial of degree $k$, express it as $\lambda=\sum_{j=0}^k x_0^j\cdot\lambda_j(x_1,\ldots,x_n)$.
\begin{enumerate}
\item Let $\lambda\in\Stress_k(\Gamma,p)$. Then $\bar{\lambda}:=\lambda_0(a_1x_1, \dots, a_nx_n)$ is in $\Stress_k(\Delta,p')$.
\item The linear map 
			\begin{equation*}
				\begin{split}
					\Stress_k(\Gamma, p) &\to \Stress_k(\Delta, p')\\
					\lambda(x_0, x_1, \dots, x_n) &\mapsto \lambda_0(a_1x_1, \dots, a_nx_n)
				\end{split}
			\end{equation*} is an isomorphism. In particular, every affine $k$-stress $\omega'$ on $(\Delta,p')$ lifts to an affine $k$-stress $\omega$ on $(\Gamma,p)$ with the property that $\omega'_F=(\prod_{i\in F}a_i)\omega_F$ for every $(k-1)$-face $F\in \Delta$.
\end{enumerate}
	\end{lemma}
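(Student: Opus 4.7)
The plan is to leverage the fact that $p(0)=\mathbf{0}$ causes most of the linear forms for $(\Gamma,p)$ not to involve $x_0$, and then to use the last form $\theta_{d+1}=x_0+\sum_{v=1}^n x_v$ to recover the $x_0$-dependent part of any stress from its $x_0$-free part. Explicitly, $\theta_i=\sum_{v=1}^n (\mathbf{v}_v)_i x_v$ for $i\leq d-1$ and $\theta_d=\sum_{v=1}^n a_v x_v$, while for $(\Delta,p')$ we have $\theta'_i=\sum_{v=1}^n (\mathbf{v}_v)_i a_v^{-1} x_v$ for $i\leq d-1$ and $\theta'_d=\sum_{v=1}^n x_v$. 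Writing $\lambda\in\Stress_k(\Gamma,p)$ as $\lambda=\sum_{j=0}^k x_0^j\lambda_j$ and noting that $\partial_{\theta_i}$ for $i\leq d$ commutes with multiplication by $x_0$, the vanishing $\partial_{\theta_i}\lambda=0$ forces $\partial_{\theta_i}\lambda_j=0$ separately for each $j$. For $j=0$, a chain-rule computation using $\partial_{x_v}\bar\lambda=a_v(\partial_{x_v}\lambda_0)(a_1x_1,\ldots,a_nx_n)$ yields $\partial_{\theta'_i}\bar\lambda=(\partial_{\theta_i}\lambda_0)(a_1x_1,\ldots,a_nx_n)=0$ for every $i\leq d$, and since $\lambda_0$ is the $x_0$-free part of $\lambda$, every monomial in $\bar\lambda$ is supported on a face of $\Delta$. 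This proves part~(1).

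For part~(2), the form $\theta_{d+1}$ is the key. Expanding $\partial_{\theta_{d+1}}\lambda=0$ in powers of $x_0$ and matching coefficients gives the recursion
\[(j+1)\lambda_{j+1}=-\sum_{v=1}^n\partial_{x_v}\lambda_j,\qquad 0\leq j\leq k-1,\]
which expresses every $\lambda_j$ in terms of $\lambda_0$; this immediately yields injectivity of the map and also a candidate for its inverse. Given $\omega'\in\Stress_k(\Delta,p')$, define $\omega_0(x_1,\ldots,x_n):=\omega'(x_1/a_1,\ldots,x_n/a_n)$, let $\omega_j$ for $1\leq j\leq k$ be produced by the recursion, and set $\omega:=\sum_{j=0}^k x_0^j\omega_j$. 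The recursion terminates at the correct degree because $\omega_k$ has degree $0$, so $\sum_v\partial_{x_v}\omega_k=0$ automatically and the would-be $\omega_{k+1}$ vanishes.

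It remains to verify that $\omega\in\Stress_k(\Gamma,p)$. The support condition follows by induction: $\omega_0$ is supported on $\Delta$, and each $\partial_{x_v}$ only shrinks supports, so every monomial $x_0^j\mu$ appearing in $\omega$ has $\supp(\mu)\in\Delta$ and hence $\{0\}\cup\supp(\mu)\in\Gamma$. For the stress equations, $\partial_{\theta_i}\omega_0=0$ for $i\leq d$ follows from $\omega'$ being a stress by the chain-rule calculation of part~(1) run in reverse; since $\partial_{\theta_i}$ commutes with each $\partial_{x_v}$ (the forms $\theta_i$ involve only $x_1,\ldots,x_n$), the recursion propagates this to $\partial_{\theta_i}\omega_j=0$ for all $j$, and hence $\partial_{\theta_i}\omega=0$. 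The identity $\partial_{\theta_{d+1}}\omega=0$ is built into the recursion. Finally, comparing the coefficients of $\prod_{i\in F}x_i$ on both sides of $\omega_0(x_1,\ldots,x_n)=\omega'(x_1/a_1,\ldots,x_n/a_n)$ yields the squarefree identity $\omega'_F=(\prod_{i\in F}a_i)\omega_F$. The only bookkeeping subtlety lies in the support/termination step; no deeper obstacle appears.
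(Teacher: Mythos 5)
Your proof is correct and follows essentially the same route as the paper's: part~(1) reduces to a chain-rule identity using that $\theta_1,\dots,\theta_d$ do not involve $x_0$, and part~(2) is driven by the recursion $(j+1)\lambda_{j+1}=-\partial_{\theta'_d}\lambda_j$ extracted from $\partial_{\theta_{d+1}}\lambda=0$, run backward from $\omega_0(x_1,\dots,x_n)=\omega'(x_1/a_1,\dots,x_n/a_n)$ to establish surjectivity. You spell out the support/termination bookkeeping that the paper compresses into ``reversing the above computations,'' but the substance is identical.
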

\begin{proof} 
Consider the linear forms $\Theta=\Theta(p)$ and $\Theta'=\Theta'(p')$ used in the definition of $\Stress_k(\Gamma, p)$ and $\Stress_k(\Delta, p')$. Note that the way $p'$ is related to $p$ implies that
\[\theta_{i0}=0 \mbox{ for all $i\leq d$; } \, \theta'_{ij}=\frac{1}{a_j}\theta_{ij} \mbox{ for all $1\leq i\leq d$ and $j\in[n]$}.\]
In particular, $\theta'_d=\sum_{i\in [n]}x_i$.
A straightforward computation then shows that for $\lambda\in\Stress_k(\Gamma,p)$, 
\begin{eqnarray*}
\partial_{\theta'_i}(\bar{\lambda})&=&\big( \partial_{\theta_i}(\lambda_0)\big) (0, a_1x_1, \dots, a_nx_n) \quad \quad \mbox{ and } \\
 0&=&\big(\partial_{\theta_i} (x_0^j\cdot\lambda_j)\big) (0, a_1x_1, \dots, a_nx_n) \quad \mbox{for any $j\geq 1$}.
\end{eqnarray*}
Consequently, 
$$ \partial_{\theta'_i}(\bar{\lambda})=\big( \partial_{\theta_i} (\lambda_0+\sum_{1\leq j\leq k} x_0^j\cdot\lambda_j)\big) (0, a_1x_1, \dots, a_nx_n)= \big( \partial_{\theta_i} \lambda\big)(0, a_1x_1, \dots, a_nx_n)=0 \quad \forall\; i\leq d,$$
and hence $\bar{\lambda}\in \Stress_k(\Delta,p')$.

For part 2, use that if $\lambda\in\Stress_k(\Gamma,p)$, then 
\[
0=\partial_{\theta_{d+1}} \lambda = \partial_{\theta_{d+1}} \big(\lambda_0 + \sum_{1\leq j\leq k}x_0^j \lambda_j\big)
=\sum_{0\leq j\leq k-1} x_0^j\big( \partial_{\theta'_d} \lambda_j + (j+1)\lambda_{j+1}\big).
\] 
For this to happen, we must have  $\lambda_{j+1}=-\frac{1}{j+1} \partial_{\theta'_d} \lambda_j$ for all $0\leq j\leq k-1$. Thus all $\lambda_j$ are determined by $\lambda_0$, which implies the injectivity of the map $\lambda(x_0, x_1, \dots, x_n) \mapsto \lambda_0(a_1x_1, \dots, a_nx_n)$. To prove its surjectivity, for $\omega'\in \Stress_k(\Delta,p')$, take $\omega_0(x_1,\ldots,x_n):=\omega'(x_1/a_1,\ldots,x_n/a_n)$ and then define $\omega_{j+1}(x_1,\ldots,x_n)$ inductively by $\omega_{j+1}:=-\frac{1}{j+1}\partial_{\theta'_d}\omega_j$ for $0\leq j\leq k-1$. Reversing the above computations shows that $\omega:= \sum_{j=0}^k x_0^j\omega_j$ is in $\Stress_k(\Gamma,p)$.
\end{proof}

In what follows, if $P\subset \R^d$ is a simplicial $d$-polytope, we always use the natural $d$-embedding $(\partial P, p)$ of $P$ and write it as $(P, p)$, where $p(v)$ is the position vector of $v$. We also write $\Stress_k(P)$ and $G(P)$ instead of $\Stress_k(\partial P,p)$ and $G(\partial P)$, respectively. Applying Lemma \ref{cone-lemma1}(2) to subcomplexes of boundary complexes of polytopes yields the following result that is at the core of the approach we will be taking in Section~4.

\begin{corollary}\label{cor: cone lemma}
	Let $P$ be a simplicial $d$-polytope with its natural embedding $p$, let $\tau\in \partial P$ be a face, and let $Q \subset \R^{d-|\tau|}$ be a quotient polytope of $P$ by $\tau$ given with its natural embedding $q$. Let $\Delta$ be a simplicial complex on $V(\partial Q) =V(\lk_{\partial P}(\tau))$ with $
	\skel_{k-1}(\overline{\tau}*\Delta)\subseteq \partial P$. Then for every $k$-stress $\bar{\lambda}$ on  $(\Delta, q)$ there exists a $k$-stress $\lambda$ on $(\overline{\tau}*\Delta, p)$ with the property that for each $(k-1)$-face $F \in \Delta$, the real numbers  $\bar{\lambda}_F$ and $\lambda_F$ have the same sign, i.e., they are both positive or both negative or both zeros. 
\end{corollary}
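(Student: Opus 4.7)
The plan is to reduce the corollary to an iterated application of Lemma \ref{cone-lemma1}(2), exploiting the fact that a quotient polytope is built from iterated vertex figures. Write $\tau=\{u_1,\ldots,u_s\}$ (the case $s=0$ being trivial), and recursively define iterated vertex figures $P_0=P$ and $P_i = $ the vertex figure of $P_{i-1}$ at $u_i$, so that $P_s=Q$; this sequence is well-defined because $\{u_i,\ldots,u_s\}\in\partial P_{i-1}$ at each stage, making $u_i$ a vertex of $P_{i-1}$. Correspondingly, set $\Delta_s=\Delta$ and, for $0\le i<s$, $\Delta_i=\overline{\{u_{i+1},\ldots,u_s\}}*\Delta$, so that $\Delta_{i-1}=u_i*\Delta_i$ and $\Delta_0=\overline{\tau}*\Delta$. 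I will build a sequence of $k$-stresses $\lambda_i\in\Stress_k(\Delta_i,p_i)$ starting from $\lambda_s=\bar\lambda$ on $(\Delta,q)$ and outputting $\lambda:=\lambda_0$ on $(\overline{\tau}*\Delta, p)$.

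Each lifting step invokes Lemma \ref{cone-lemma1}(2) with apex $u_i$. The key tactic is to pick affine coordinates on $P_{i-1}$ so that $p_{i-1}(u_i)=\mathbf{0}$ and the hyperplane $\{x_{d-i+1}=1\}$ is the vertex-figure hyperplane at $u_i$; this is possible because $u_i$ is a vertex of $P_{i-1}$, so any strictly supporting hyperplane there can be moved into this form by an affine transformation. The payoff is that every vertex $w\in V(\Delta_i)$ of $P_{i-1}$ has positive last coordinate $a_w>0$, so in the relation
\[
(\lambda_i)_F=\Bigl(\prod_{w\in F}a_w\Bigr)(\lambda_{i-1})_F
\]
provided by Lemma \ref{cone-lemma1}(2), the product is positive and hence $\sign(\lambda_i)_F=\sign(\lambda_{i-1})_F$ for every $(k-1)$-face $F$ of $\Delta_i$. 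Since each $(k-1)$-face $F\in\Delta$ is automatically a $(k-1)$-face of every intermediate $\Delta_i$, iterating from $i=s$ down to $i=1$ yields $\sign \bar\lambda_F=\sign(\lambda_0)_F$, which is exactly the claim.

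The main subtlety is organizational rather than mathematical: the induced embedding $p_i$ obtained at each step is only an affine representative of the natural embedding of $P_i$, not that embedding itself, but this is legitimate because $k$-stress spaces, together with the stress polynomials they contain, depend only on the affine equivalence class of the embedding, so we may freely re-pick coordinates on $P_{i-1}$ before invoking the lemma. In particular, one need not verify that $\Delta_i$ sits as a subcomplex of $\partial P_i$ for intermediate $i$, since Lemma \ref{cone-lemma1} is purely algebraic and applies to any abstract simplicial complex equipped with an embedding of its vertices.
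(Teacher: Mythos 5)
Your proof is correct and essentially matches the paper's, which reduces to the case that $\tau$ is a single vertex, normalizes the embedding so that $p(\tau)=\mathbf{0}$ and the vertex-figure hyperplane is $\{t_d=1\}$ (so that every other vertex has positive last coordinate), and then invokes Lemma~\ref{cone-lemma1}(2) to transport the stress while preserving signs. You simply make the induction over the vertices of $\tau$ explicit, together with the (correct) observation that the intermediate complexes $\Delta_i$ need not live inside $\partial P_i$ because Lemma~\ref{cone-lemma1} is a purely algebraic statement about an embedded complex.
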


\begin{proof}
	It suffices to prove the statement in the case that $\tau$ is a vertex. Since the space of affine stresses is unaffected by Euclidean motions and scalings, we can always assume that $p(\tau)$ is the origin, and that the hyperplane $H$ that $Q$ lies in, i.e., the hyperplane we use to separate $p(\tau)$ from the rest of the vertices is $H=\{(t_1,\ldots,t_d)\in\R^d : t_d=1\}$, and hence that for each vertex $v\neq \tau$, the last coordinate of $p(\tau)$ is strictly greater than 1. The isomorphism provided by Lemma \ref{cone-lemma1}(2) then lifts a $k$-stress  $\bar{\lambda}$ on $(\Delta, q)$ to a $k$-stress $\lambda$ on $(\overline{\tau}*\Delta,p)$ that has the desired property.
\end{proof}

\subsection{Infinitesimal rigidity of frameworks}

Let $(G, p)$ be a $d$-framework. Recall our assumption that $\Aff(p(v) : v\in V(G))$ is $\R^d$ (i.e., this framework does not lie in a hyperplane of $\R^d$). The left kernel space of the $2$-rigidity matrix $R_2(G, p)$ is called the {\em infinitesimal motion space} of $G\subset \R^d$. Since all Euclidean motions of $\R^d$ induce infinitesimal motions of $G$, it follows that the dimension of this space is at least ${d+1 \choose 2}$. We say that $(G, p)$ is {\em infinitesimally rigid} in $\R^d$ if the dimension of its infinitesimal motion space is exactly $\binom{d+1}{2}$. Basic linear algebra then yields:
\begin{theorem}\label{thm: rigidity equivalence}
	Let $(G, p)$ be a $d$-framework with $f_0$ vertices and $f_1$ edges. The following statements are equivalent:
	\begin{enumerate}
		\item $G$ is infinitesimally rigid in $\R^d$.
		\item The rank of $R_2(G, p)$ is $df_0-\binom{d+1}{2}$.
		\item The dimension of $\Stress_2(G, p)$ is $f_1-df_0+\binom{d+1}{2}$.
	\end{enumerate}
\end{theorem}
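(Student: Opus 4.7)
The plan is to observe that this is a pure rank-nullity computation applied to the $2$-rigidity matrix $R_2(G,p)$, exactly as the author hints. By the definitions from Section~3.1, $R_2(G,p)$ has size $d f_0 \times f_1$: its rows are grouped into $f_0$ blocks (one per vertex) of size $d$, and its columns are indexed by the $f_1$ edges. Write $r$ for its rank.

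First I would set up the two kernels. By the definition given just above the statement, the infinitesimal motion space is the left kernel of $R_2(G,p)$, so its dimension is $d f_0 - r$. On the other hand, the identification of $\Stress_2(G,p)$ with the (right) kernel of $R_2(G,p)$ — recorded right after the balancing-condition theorem — shows that $\dim \Stress_2(G,p) = f_1 - r$.

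Now the equivalences fall out by substitution. Condition (1) says that the left kernel has dimension exactly $\binom{d+1}{2}$, i.e.\ $d f_0 - r = \binom{d+1}{2}$, which is precisely (2). Plugging the value of $r$ from (2) into $\dim \Stress_2(G,p) = f_1 - r$ yields (3); conversely, (3) forces $r = d f_0 - \binom{d+1}{2}$, recovering (2). This chains into (1) $\Leftrightarrow$ (2) $\Leftrightarrow$ (3).

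There is essentially no obstacle here: the only thing one has to be slightly careful about is that the lower bound $\binom{d+1}{2}$ on the infinitesimal motion space — coming from Euclidean motions of $\R^d$ acting nontrivially on the framework, under the standing assumption that $\Aff(p(v):v\in V(G))=\R^d$ — is already built into the definition of infinitesimal rigidity, so it need not be re-proved here. Everything else is the two-line dimension count above.
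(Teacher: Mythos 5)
Your proof is correct and matches the paper's intent: the paper dismisses this with ``basic linear algebra then yields,'' and your rank--nullity computation on $R_2(G,p)$ (sizes $df_0\times f_1$, left kernel $=$ motion space, right kernel $=$ $\Stress_2(G,p)$) is exactly that linear algebra spelled out.
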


One immediate and well-known corollary of Theorem \ref{thm: rigidity equivalence} we will use is   

\begin{corollary}\label{cor: add missing edge}
	Let $d\geq 3$. Let $(G, p)$ be a $d$-framework and let $e$ be a missing edge of $G$. If $(G, p)$ is infinitesimally rigid in $\R^d$, then there exists an affine $2$-stress $\lambda$ on $(G \cup \{e\}, p)$ with $\lambda_e\neq 0$.
\end{corollary}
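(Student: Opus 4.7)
The plan is to show that adding the edge $e$ to $G$ increases $\dim \Stress_2$ by exactly one, and then to identify this extra dimension with a stress that is non-zero on $e$. The tool throughout is the dictionary between rigidity and stresses provided by Theorem \ref{thm: rigidity equivalence}, applied to both $(G, p)$ and $(G \cup \{e\}, p)$; note that since $e$ is a missing edge, both endpoints of $e$ are already vertices of $G$, so the two frameworks have the same vertex set and the same ambient embedding $p$.

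First I would verify that $(G \cup \{e\}, p)$ is itself infinitesimally rigid in $\R^d$. This is a standard observation at the level of the rigidity matrix: $R_2(G \cup \{e\}, p)$ is obtained from $R_2(G, p)$ by appending one new column indexed by $e$, so its left kernel is a subspace of the left kernel of $R_2(G, p)$. Since the latter already has the minimum possible dimension $\binom{d+1}{2}$ coming from Euclidean motions, and the former must still contain all these trivial motions, the two infinitesimal motion spaces coincide. Hence $(G \cup \{e\}, p)$ is infinitesimally rigid as well.

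Next, applying part (3) of Theorem \ref{thm: rigidity equivalence} to both frameworks and using that $f_1(G \cup \{e\}) = f_1(G) + 1$ while $f_0$ is unchanged, I obtain
$$\dim \Stress_2(G \cup \{e\}, p) = \dim \Stress_2(G, p) + 1.$$
To finish, I would exploit the natural linear map $\Stress_2(G, p) \hookrightarrow \Stress_2(G \cup \{e\}, p)$ that extends any stress $\lambda$ on $G$ by declaring $\lambda_e := 0$; this is well-defined since the balancing condition at each vertex of $G$ is unaffected by the extra zero term indexed by $e$, and it is manifestly injective. Its image is precisely the subspace of stresses on $(G \cup \{e\}, p)$ that vanish on $e$, and by the dimension count this subspace has codimension one. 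Therefore there exists some $\lambda \in \Stress_2(G \cup \{e\}, p)$ outside the image, and any such $\lambda$ satisfies $\lambda_e \neq 0$, as required.

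There is essentially no obstacle in this argument; the one step that deserves a moment's thought is the preservation of infinitesimal rigidity under adding an edge, which falls out immediately from the observation that appending a column to the rigidity matrix can only shrink its left kernel.
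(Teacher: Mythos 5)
Your proof is correct. The paper does not supply a proof at all — it presents the corollary as an "immediate and well-known" consequence of Theorem \ref{thm: rigidity equivalence} — and your argument is exactly the standard one that justifies this: adding a column to $R_2$ can only shrink the left kernel, so $(G\cup\{e\},p)$ is still infinitesimally rigid; then part~(3) of Theorem~\ref{thm: rigidity equivalence} gives a one-dimensional jump in $\dim\Stress_2$, and extension-by-zero identifies the old stress space with the codimension-one subspace of stresses vanishing on $e$. (Equivalently, one can phrase the last two steps by noting that the column of $e$ lies in the span of the remaining columns, hence participates in a linear dependence; this is the same dimension count in different clothing.)
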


The following fundamental theorem is due to Whiteley \cite{Whiteley-84}. 

\begin{theorem}\label{Whiteley-thm}
	Let $d\geq 3$ and let $P\subset \R^d$ be a simplicial $d$-polytope. The graph of $P$ with its natural embedding is infinitesimally rigid in $\R^d$. In particular, $g_2(P)=\dim \Stress_2(P)$.
\end{theorem}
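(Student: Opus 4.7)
The plan is to prove Whiteley's theorem by induction on $d$, with base case $d=3$ provided by Dehn's classical rigidity theorem for simplicial convex polyhedra in $\R^3$, and inductive step combining the Cone Lemma with a gluing argument along the graph. Once infinitesimal rigidity of $G(P)$ is established in $\R^d$, the equality $g_2(P) = \dim \Stress_2(P)$ is immediate from Theorem \ref{thm: rigidity equivalence}: expanding the definition of $g_2$ yields $g_2(P) = f_1(P) - d f_0(P) + \binom{d+1}{2}$, and since affine $2$-stresses are supported on edges we have $\Stress_2(P) = \Stress_2(G(P), p)$.

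For the inductive step with $d \geq 4$, fix an arbitrary vertex $v$ of $P$. Its vertex figure $Q_v$ is a simplicial $(d-1)$-polytope whose boundary complex is $\lk_{\partial P}(v)$, and by the inductive hypothesis $G(Q_v)$ is infinitesimally rigid in $\R^{d-1}$. Applying Lemma \ref{cone-lemma1}(2) with $\Delta = \lk_{\partial P}(v)$ and $\Gamma = v \ast \Delta = \st_{\partial P}(v)$ transfers the stress-space dimension from $\R^{d-1}$ to $\R^d$; combined with Theorem \ref{thm: rigidity equivalence}, this shows that $G(\st_{\partial P}(v))$ is infinitesimally rigid in $\R^d$.

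It then remains to glue the stars together to recover $G(P) = \bigcup_{v \in V(\partial P)} G(\st_{\partial P}(v))$. For any edge $uv \in \partial P$, the intersection $V(\st(u)) \cap V(\st(v))$ contains $\{u,v\} \cup V(\lk_{\partial P}(uv))$, and in particular it contains the vertex set of any facet of $P$ through $uv$; since facets are $(d-1)$-simplices in $\partial P$, these $d$ vertices are affinely independent and hence span a hyperplane of $\R^d$. The gluing lemma of Asimow--Roth then yields infinitesimal rigidity of $G(\st(u)) \cup G(\st(v))$ in $\R^d$. Because $G(P)$ is connected (in fact $d$-connected) by Balinski's theorem, iteratively gluing stars along a spanning tree of $G(P)$ propagates infinitesimal rigidity to all of $G(P)$.

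The main obstacle I anticipate is the gluing step: verifying that two stars share $d$ affinely independent vertices in $\R^d$. The observation that every facet through the shared edge $uv$ furnishes exactly such a set resolves this, but this argument crucially relies on working with \emph{adjacent} vertices rather than arbitrary pairs, which is precisely why we must traverse the graph along a spanning tree. The base case $d=3$ (Dehn's theorem) is classical but nontrivial on its own; once granted, the propagation via Balinski's theorem and the final dimension count are essentially bookkeeping.
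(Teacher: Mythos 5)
Your argument is correct and matches the approach the paper attributes to Whiteley, which the paper itself only sketches (via Lemma~\ref{cone-lemma2} for vertex-star rigidity and Lemma~\ref{gluing-lemma} for gluing) rather than proving in full: induction on $d$ with Dehn's lemma as the base case, transfer of rigidity from the vertex figure $Q_v$ in $\R^{d-1}$ to $\st_{\partial P}(v)$ in $\R^d$ via the Cone Lemma together with the dimension count $\bigl(f_1(\lk v)-(d-1)f_0(\lk v)+\binom{d}{2}\bigr)=\bigl(f_1(\st v)-df_0(\st v)+\binom{d+1}{2}\bigr)$, and then gluing stars along a spanning tree using the observation that any facet through a shared edge $uv$ supplies $d$ affinely independent common vertices. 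The only minor point worth making explicit is that Lemma~\ref{cone-lemma1}(2) yields equality of stress-space dimensions, and it is this equality combined with the count above and Theorem~\ref{thm: rigidity equivalence} that delivers infinitesimal rigidity of the star; you gesture at this but do not carry out the arithmetic.
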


The case $d=3$ of this theorem is due to Dehn and is known as Dehn's lemma; we shall review its proof in Section 5. Whiteley's proof for $d\geq 4$ is by induction on $d$. One crucial ingredient is the following result about stars of faces that follows from the Cone Lemma (applied to vertex figures) 
and Theorem \ref{thm: rigidity equivalence}.

\begin{lemma}\label{cone-lemma2}
	Let $d \geq 4$ and let $P$ be a simplicial $d$-polytope with its natural embedding $p$ in $\R^d$. Then for every face $\tau$ of $\partial P$ with $0\leq \dim \tau \leq d-4$, the framework $(G(\st_{\partial P}(\tau)), p)$ is infinitesimally rigid in $\R^d$.
\end{lemma}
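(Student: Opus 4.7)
The plan is to iterate Lemma \ref{cone-lemma1}(2) to reduce from $\st_{\partial P}(\tau)$ down to $\lk_{\partial P}(\tau)$, and then apply Whiteley's theorem (Theorem \ref{Whiteley-thm}) to a quotient polytope. Set $t := |\tau|$; the hypothesis $\dim\tau \leq d-4$ gives $t \leq d-3$. Let $Q$ be a quotient polytope of $P$ by $\tau$ together with its natural embedding $q$, so $Q$ is a simplicial polytope of dimension $d' := d-t \geq 3$ and $\partial Q = \lk_{\partial P}(\tau)$ as abstract simplicial complexes. Since $d' \geq 3$, Theorem \ref{Whiteley-thm} combined with Theorem \ref{thm: rigidity equivalence}(3) yields
\[ \dim \Stress_2(\partial Q, q) = f_1(\partial Q) - d'\,f_0(\partial Q) + \binom{d'+1}{2}. \]

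Next, label $\tau=\{v_1,\ldots,v_t\}$ so that $Q$ is built from $P$ by iteratively taking the vertex figure at $v_1$, then at $v_2$, and so on. Starting from the natural embedding $p$ of $\partial P$, a translation putting $p(v_1)$ at the origin together with a rotation ensuring that every remaining vertex has nonzero last coordinate (neither operation affects stress spaces) places $p$ into the form prescribed by Lemma \ref{cone-lemma1} with apex $v_1$. The induced $(d-1)$-embedding furnished by that lemma sends each remaining vertex $u$ to the intersection of the line through $p(v_1)$ and $p(u)$ with the hyperplane $\{x_d=1\}$, which is precisely the natural embedding of the vertex figure of $P$ at $v_1$, restricted to $\overline{\{v_2,\ldots,v_t\}} * \partial Q$. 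Iterating this step $t$ times produces the isomorphism
\[ \Stress_2(\st_{\partial P}(\tau), p) \cong \Stress_2(\partial Q, q). \]

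Finally, since $\st_{\partial P}(\tau)=\bar\tau*\partial Q$, one reads off
\[ f_0(\st_{\partial P}(\tau)) = t + f_0(\partial Q), \qquad f_1(\st_{\partial P}(\tau)) = \binom{t}{2} + t\,f_0(\partial Q) + f_1(\partial Q), \]
and a direct algebraic check confirms the identity $\binom{t}{2} - dt + \binom{d+1}{2}=\binom{d-t+1}{2}$. Combining this identity with the previous two displays yields
\[ \dim \Stress_2(\st_{\partial P}(\tau), p) = f_1(\st_{\partial P}(\tau)) - d\,f_0(\st_{\partial P}(\tau)) + \binom{d+1}{2}, \]
which by Theorem \ref{thm: rigidity equivalence} is equivalent to infinitesimal rigidity of $(G(\st_{\partial P}(\tau)),p)$ in $\R^d$. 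The principal obstacle is verifying that the embeddings output by successive applications of Lemma \ref{cone-lemma1} agree with the natural embeddings of the corresponding iterated vertex figures, so that Whiteley's theorem actually applies at the final step; once this geometric identification is justified, the remaining argument is purely a dimension count.
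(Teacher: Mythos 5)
Your argument is the one the paper has in mind (the lemma is recorded there as a consequence of the Cone Lemma and Theorem \ref{thm: rigidity equivalence} without a written proof), and the concluding dimension count is correct: the $f_0,f_1$ computations for $\overline{\tau}*\partial Q$ and the identity $\binom{t}{2}-dt+\binom{d+1}{2}=\binom{d-t+1}{2}$ check out, so the two applications of Theorem \ref{thm: rigidity equivalence} do the rest.

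The one place that needs tightening is precisely the step you flag as ``the principal obstacle,'' and your proposed normalization is slightly too weak to close it. Making every remaining vertex have a \emph{nonzero} last coordinate $a_u$ does not guarantee that the auxiliary embedding $p'(u)=\mathbf{v}_u/a_u$ produced by Lemma \ref{cone-lemma1} is a vertex-figure embedding: if the $a_u$ have mixed signs, $\{x_d=c\}$ separates $p(v_1)=\mathbf{0}$ from nothing, $p'$ is not the boundary embedding of a convex polytope, and Whiteley's theorem cannot be invoked for $(\partial Q, p')$. What is actually needed is that all remaining vertices lie strictly on one side, i.e.\ $a_u>0$ for all $u\neq v_1$ (and, after a harmless scaling of the last coordinate, $a_u>1$). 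This is achievable because $p(v_1)$ is a vertex of the convex polytope $P$, hence admits a strictly separating hyperplane; a Euclidean motion places this hyperplane at $\{x_d=1\}$. The paper performs exactly this normalization in the proof of Corollary \ref{cor: cone lemma}. With this correction in place (and repeated before each of the $t$ applications of Lemma \ref{cone-lemma1}), your identification of $p'$ with the natural embedding of the iterated vertex figure is valid and the rest of your proof goes through.
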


The other ingredient of Whiteley's proof is the Gluing Lemma, which allows us to form larger infinitesimally rigid frameworks from the stars of faces in a polytope.
\begin{lemma} {\rm{(The Gluing Lemma \cite[Thm.~2]{AsimowRothII} and \cite[Cor.~6.12]{Lee-notes})}} \label{gluing-lemma}
	Let $G$ and $G'$ be graphs, and let $(G \cup G',p)$ be a $d$-framework.  If $(G, p)$ and $(G', p)$ are infinitesimally rigid in $\R^d$ and have $d$ affinely independent vertices in common (i.e., the framework $(G \cap G', p)$ affinely spans a subspace of dimension at least $d-1$), then $(G \cup G', p)$ is infinitesimally rigid in $\R^d$.
\end{lemma}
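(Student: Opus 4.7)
The plan is to work directly with infinitesimal motions, using the standard characterization that a framework $(G, p)$ is infinitesimally rigid in $\R^d$ if and only if every infinitesimal motion of $(G,p)$ has the form $u(v) = A\,p(v) + c$ for some skew-symmetric $d \times d$ matrix $A$ and some $c \in \R^d$ (equivalently, every infinitesimal motion is the restriction of one of the $\binom{d+1}{2}$ trivial motions of $\R^d$). This is the content of Theorem \ref{thm: rigidity equivalence} dualized on the left kernel.

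Given an arbitrary infinitesimal motion $u$ of the union $(G \cup G', p)$, I would first restrict to $V(G)$: since $(G, p)$ is infinitesimally rigid in $\R^d$, there exist a skew-symmetric $A_1$ and a vector $c_1$ with $u(v) = A_1 p(v) + c_1$ for all $v \in V(G)$. The same argument applied to $(G', p)$ would yield a skew-symmetric $A_2$ and $c_2$ with $u(v) = A_2 p(v) + c_2$ for all $v \in V(G')$. I would then compare these two formulas on $V(G) \cap V(G')$, where they must agree.

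The core step is to deduce $A_1 = A_2$ and $c_1 = c_2$. Picking $d$ affinely independent common vertices $w_1, \ldots, w_d$ (guaranteed by hypothesis) and setting $A := A_1 - A_2$, subtracting the equation $A\,p(w_1) + (c_1 - c_2) = \emb{0}$ from the analogous equations at $w_2, \ldots, w_d$ gives $A(p(w_i) - p(w_1)) = \emb{0}$ for $i = 2, \ldots, d$. Affine independence of the $w_i$ makes the $d - 1$ difference vectors linearly independent, so $\mathrm{rank}\,A \leq 1$. The decisive observation is then that $A$ is skew-symmetric, hence has even rank, forcing $A = 0$; plugging back in yields $c_1 = c_2$ as well. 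Consequently $u$ extends to a single trivial motion on all of $V(G \cup G')$, which is exactly the infinitesimal rigidity of $(G \cup G', p)$.

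The step that is sharp---and where the hypothesis is genuinely needed---is this parity argument: a general $d \times d$ matrix vanishing on a $(d-1)$-dimensional subspace need not be zero, so knowing $A$ vanishes on $d-1$ linearly independent vectors is by itself insufficient, and it is precisely the evenness of the rank of a real skew-symmetric matrix that closes the gap. Everything else is routine passage between infinitesimal motions and the $2$-rigidity matrix.
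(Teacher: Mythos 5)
Your proof is correct. The paper itself does not supply a proof of the Gluing Lemma---it cites \cite[Thm.~2]{AsimowRothII} and \cite[Cor.~6.12]{Lee-notes}---so there is no internal argument to compare against. The route you take (restrict an arbitrary infinitesimal flex of $G\cup G'$ to each piece, use infinitesimal rigidity of the pieces to write both restrictions as trivial motions $v\mapsto A_ip(v)+c_i$ with $A_i$ skew-symmetric, then argue that agreement on $d$ affinely independent common vertices forces $A_1=A_2$ and $c_1=c_2$) is the standard argument found in those references and in rigidity-theory texts more generally. Your identification of the crucial step is also apt: $d$ affinely independent common points give only $d-1$ linearly independent difference vectors, so one concludes $\mathrm{rank}(A_1-A_2)\le 1$, and it is exactly the fact that a nonzero real skew-symmetric matrix has rank at least $2$ that closes the argument. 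One small point worth making explicit is that passing from ``infinitesimally rigid'' to ``every infinitesimal motion is of the form $Ap(v)+c$'' uses that each of $G$ and $G'$ already contains $d$ affinely independent vertices (so the trivial motions restrict injectively and span a $\binom{d+1}{2}$-dimensional subspace of the flex space); this is guaranteed here since they share such a set, and under the paper's running convention each is a $d$-framework. With that noted, the proposal is complete and matches the cited approach.
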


We end this section mentioning the celebrated $g$-theorem \cite{McMullen96, Stanley80} that provides a far reaching generalization of Dehn's lemma. Stated in the language of stresses it asserts the following.

\begin{theorem}
Let $P\subset\R^d$ be a simplicial $d$-polytope. Then $g_k(P)=\dim \Stress_k(P)$ for all $k$.
\end{theorem}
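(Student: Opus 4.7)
The plan is to recast the identity $g_k(P)=\dim\Stress_k(P)$ as a Hilbert-function statement for an Artinian reduction of the Stanley--Reisner ring of $\partial P$, and then invoke the Strong Lefschetz Property for simplicial polytopes proved by Stanley~\cite{Stanley80} for rational $P$ and by McMullen~\cite{McMullen96} in general. The dictionary between stress spaces and commutative algebra is provided by Macaulay's theory of inverse systems (apolarity), as developed in this setting by Lee~\cite{Lee94, Lee96}.

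Let $\R[\partial P]=\R[X]/I_{\partial P}$ denote the Stanley--Reisner ring. The monomial-support condition in the definition of a $k$-stress says precisely that $\lambda$ is represented by an element of $\R[\partial P]_k$, while the annihilation condition $\partial_{\theta_i}\lambda=0$ for $i\in[d+1]$ says that $\lambda$ lies in the Macaulay inverse system in degree $k$ of the ideal generated by $\theta_1,\ldots,\theta_{d+1}$. Apolarity duality then gives
\[
\dim\Stress_k(P)\;=\;\dim\bigl(\R[\partial P]/(\theta_1,\ldots,\theta_{d+1})\bigr)_k.
\]
Since $\partial P$ triangulates $S^{d-1}$, Reisner's criterion forces $\R[\partial P]$ to be Cohen--Macaulay of Krull dimension $d$, and after translating so that the origin lies in the interior of $P$, the first $d$ forms $\theta_1,\ldots,\theta_d$ constitute a linear system of parameters. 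Consequently the intermediate Artinian quotient $A:=\R[\partial P]/(\theta_1,\ldots,\theta_d)$ has Hilbert function equal to the $h$-vector of $\partial P$, and the theorem reduces to the Lefschetz claim that multiplication by $\ell:=\theta_{d+1}=\sum_{v}x_v$ is injective from $A_{k-1}$ into $A_k$ for every $1\leq k\leq\lfloor d/2\rfloor$. Combining this injectivity with the Dehn--Sommerville relations $h_k=h_{d-k}$ (which force multiplication by $\ell$ to be surjective in the complementary range) yields
\[
\dim(A/\ell A)_k \;=\; \begin{cases} h_k-h_{k-1}=g_k, & 0\leq k\leq\lfloor d/2\rfloor,\\ 0=g_k, & k>\lfloor d/2\rfloor,\end{cases}
\]
which is the desired identity for all $k$.

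The hard part is establishing this Lefschetz injectivity, and it is unquestionably the main obstacle of the whole proof. When $P$ has rational vertices, the classical route constructs the projective toric variety $X_P$ associated with $P$, identifies $A$ with the rational cohomology ring $H^{\ast}(X_P,\R)$ (the odd degrees vanish because $X_P$ is a $\Q$-homology manifold admitting an affine paving), and interprets $\ell$ as cup product with the class of the ample line bundle that $P$ defines; the hard Lefschetz theorem for projective varieties then supplies the required injectivity. The non-rational case can be handled either by a perturbation argument---the Lefschetz property is an open condition on the linear forms while rational polytopes are dense among polytopes of any fixed combinatorial type---or by McMullen's entirely combinatorial proof inside the polytope algebra, which bypasses toric geometry altogether. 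Everything else in the argument is formal bookkeeping; the entire mathematical weight rests on this single Strong Lefschetz step.
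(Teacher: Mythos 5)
The paper does not supply a proof of this theorem at all: it states it as a black box and cites Stanley~\cite{Stanley80} and McMullen~\cite{McMullen96}, with the translation of the Strong Lefschetz Property into the language of stresses attributed to Lee~\cite{Lee94, Lee96}. Your sketch therefore has nothing in the paper to mirror; what you have written is a condensed account of the literature the authors are pointing to, and that account is essentially right. The Macaulay-duality step (identifying $\Stress_k(P)$ with $(\R[\partial P]/(\theta_1,\ldots,\theta_{d+1}))_k$), the Cohen--Macaulay/l.s.o.p.\ step giving the $h$-vector as the Hilbert function of the intermediate Artinian reduction $A$, and the reduction of the claim to injectivity of $\cdot\ell\colon A_{k-1}\to A_k$ in the bottom half together with Gorenstein/Poincar\'e duality in the top half, is exactly the standard bridge between the two formulations.

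One part of the final paragraph does not survive scrutiny, however. The ``perturbation argument'' you offer as an alternative to McMullen's proof only gives half of what you need. Since $\dim\Stress_k(P)=\dim\ker R_k(\partial P,p)$, this quantity is \emph{upper} semicontinuous in the embedding $p$; its generic value over realizations of a fixed polytopal sphere is $g_k$ (by the rational/toric case), so density of rational realizations yields only $\dim\Stress_k(P)\geq g_k$ for an arbitrary convex realization. The reverse inequality is precisely the content of the theorem for a \emph{specific, possibly non-generic} convex embedding, and it does not follow from openness of the Lefschetz locus plus density of the rationals. This is why McMullen's polytope-algebra argument (or one of the later proofs establishing that every convex realization is Lefschetz, e.g.\ Timorin's) is genuinely needed; it is not merely an aesthetic alternative to perturbation. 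If you want to keep the second option, you should either drop it or replace it with a statement of the actual theorem being invoked rather than a genericity heuristic. Everything else in your reduction is fine.
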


\section{Several variations of Kalai's conjecture}
In this section we propose and discuss several conjectures of increasing strength each of which implies Conjecture \ref{conj:Kalai}.
Our approach is motivated by the following toy example. Consider two sets $\sigma=\{1,2,\ldots,i, i+1\}$ and $\tau=\{i+2,\ldots,d+2\}$ where $2\leq i\leq d/2$, and let $\Delta$ be $\partial\overline{\sigma}*\partial\overline{\tau}$. Then $\Delta$ is a simplicial sphere realizable as the boundary complex of a simplicial $d$-polytope. Let $P\subset \R^d$ be any such polytope. Since $\sigma$ and $\tau$ are missing faces of  $\partial P=\Delta$ and since their union is the entire vertex set, the convex hulls of $p(\sigma)$ and $p(\tau)$ must intersect in their relative interiors. (Here, as always, $p(v)$ is the position vector of vertex $v$ in $P$.) Thus there exist positive coefficients $c_\ell\in \R$ such that 
$$\sum_{\ell=1}^{i+1} c_\ell \; p(\ell)- \sum_{\ell=i+2}^{d+2} c_\ell \; p(\ell)=\emb{0}, \quad  \sum_{\ell=1}^{i+1} c_\ell=\sum_{\ell=i+2}^{d+2} c_\ell=1.$$
Hence $\lambda:=\sum_{\ell=1}^{i+1} c_\ell x_\ell- \sum_{\ell=i+2}^{d+2} c_\ell x_{\ell}$ is an element of $\Stress_1(P)$. (Here, following the notation of Section 3.1, $x_{\ell}$ denotes the variable corresponding to vertex $\ell$.) 

Since every $i$-subset of $\sigma\cup\tau$ forms a face of $\partial P$, it follows that $\lambda^k$ is in $\Stress_k(P)$ for all $1\leq k\leq i$. On the other hand, $P$ is $i$-neighborly and has $d+2$ vertices. In particular, for each $1\leq k\leq i$, $g_k(P)=\binom{(d+2)-d+(k-2)}{k}=1$. (This is well-known, see \cite[Section 8.4]{Ziegler}, and also easily follows  by direct computation.) Hence $\dim \Stress_k(P)=1$. This implies that $\lambda^k$ spans $\Stress_k(P)$.  An important thing to observe now is that by our definition of $\lambda$, the values $((\lambda^k)_G)_{G\in\Delta,  |G|=k}$ have the following property:  if $F$ is a $(k-1)$-subset of the missing face $\sigma$, then for every $\ell\in \sigma\setminus F$, $(\lambda^k)_{F\cup \ell}$ is positive, while for every $\ell\in \tau$, $(\lambda^k)_{F\cup \ell}$ is negative. A surprising aspect of this observation is that it holds for {\bf any} polytope $P$ whose boundary complex is $\Delta$! 

This observation suggests that the collection of sign vectors of $k$-stresses on $P$ may contain enough information to identify the missing faces of $P$. We are thus led to the following definition.

\begin{definition} Let $P$ be a simplicial $d$-polytope with its natural embedding $p$. For an affine $k$-stress $\lambda$ on $(P, p)$ and a $(k-1)$-face $G$ of $P$, let $$\sign(\lambda_G)=
	\begin{cases}
		+ & \mbox{if }\lambda_G>0 \\
		- & \mbox{if }\lambda_G<0 \\
		0 & \mbox{if }\lambda_G=0.
	\end{cases}$$
	Define $\V_k(P)=\{(\sign(\lambda_G))_{G\in \partial P, \;|G|=k}: \lambda\in \Stress_k(P)\}$. Thus $\V_k(P)$ is the collection of sign vectors of the squarefree parts of $k$-stresses on $P$.
\end{definition}

With this definition in hand, we propose the following strengthening of Conjecture \ref{conj:Kalai}.
\begin{conjecture}\label{conj1}
	Let $k\geq 2$ and $d\geq 2k$. Let $P\subset \R^d$ be a simplicial $d$-polytope. The $(k-1)$-skeleton of $\partial P$ and the set $\V_k(P)$ determine the entire complex $\partial P$.
\end{conjecture}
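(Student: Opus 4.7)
The natural strategy is to recover all missing faces of $\partial P$ from the given data, since these determine $\partial P$ completely. Missing faces of dimension at most $k-2$ are directly visible in $\skel_{k-1}(\partial P)$, so the problem reduces to detecting missing faces of dimensions $\geq k-1$. The pivotal case is that of missing $k$-faces: $(k+1)$-subsets $G\subseteq V(\partial P)$ whose proper subsets all lie in $\skel_{k-1}(\partial P)$ but with $G\notin\partial P$.

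To test such a candidate $G$, I would fix a $(k-1)$-subset $F\subsetneq G$ (so $F$ is a $(k-2)$-face of $\partial P$ and $|G\setminus F|=2$) and pass to the quotient polytope $Q=P/F$, a simplicial $(d-k+1)$-polytope with $d-k+1\geq k+1\geq 3$ (using $d\geq 2k$ and $k\geq 2$). The pair $G\setminus F$ is an edge of $\partial Q=\lk_{\partial P}(F)$ iff $G\in\partial P$. Whiteley's theorem (Theorem~\ref{Whiteley-thm}) guarantees that $G(Q)$ is infinitesimally rigid in $\R^{d-k+1}$, so when $G$ is missing, Corollary~\ref{cor: add missing edge} yields a $2$-stress $\mu$ on $G(Q)\cup\{G\setminus F\}$ with $\mu_{G\setminus F}\neq 0$. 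For $k=2$ this $\mu$ lifts, via Corollary~\ref{cor: cone lemma}, to a $2$-stress on $\overline{F}*(G(Q)\cup\{G\setminus F\})$ whose value on $G\setminus F$ is nonzero and has the same sign as $\mu_{G\setminus F}$; combining this with the gluing lemma and Balinski's theorem should then let one contrast the resulting sign pattern with what $\V_2(P)$ actually exhibits, and so decide whether $G\setminus F$ is present in the link. The toy example preceding Conjecture~\ref{conj1} is the prototype for general $k$: a missing face forces a distinguished stress whose signs on the $k$-subsets sharing a fixed $(k-2)$-face cleanly split the link vertices into two classes. Once all missing $k$-faces are identified, $\skel_k(\partial P)$ is determined, and when $k=\lfloor d/2\rfloor$ the Perles--Dancis theorem then completes the reconstruction.

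The chief obstacle is twofold. First, for $k\geq 3$ the cone lemma (Corollary~\ref{cor: cone lemma}) preserves stress degree, so the quotient $2$-stress $\mu$ above does not directly lift to an element of $\Stress_k(P)$; one must instead produce genuine $k$-stresses on $\partial Q$ whose sign patterns on the $(k-1)$-faces of $\partial Q$ neighboring the hypothetical edge $G\setminus F$ are forced when that edge is absent, and only then lift them to $\Stress_k(P)$ so as to be visible in $\V_k(P)$. Second, for general $k<\lfloor d/2\rfloor$ one must iterate: recovering missing faces of dimensions $k+1,\ldots,\lfloor d/2\rfloor$ from only $k$-stress information (we have no $(k+j)$-stress data for $j\geq 1$) appears to require genuinely new ingredients, presumably global constraints such as Dehn--Sommerville relations, the pseudomanifold structure of $\partial P$, and a more refined use of sign patterns than the local balance conditions used above. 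These two obstacles are presumably why the paper only settles the $k=2$ and $k$-neighborly cases and leaves the full conjecture open.
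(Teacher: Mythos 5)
Conjecture~\ref{conj1} is genuinely open; the paper settles it only for $k=2$ (Theorem~\ref{thm:k=2}) and for $k$-neighborly polytopes (Theorem~\ref{thm:k-neighb}), so you were right not to claim a complete proof. But your analysis misses the paper's key structural insight and therefore misdiagnoses where the difficulty actually lies. The paper's reduction (Lemma~\ref{lm: missing face condition} through Lemma~4.7) does \emph{not} proceed by fixing a $(k-1)$-subset $F$ of a missing face and passing to the quotient $P/F$ to look for a $2$-stress, as you propose; as you yourself note, that creates a degree mismatch for $k\geq 3$. Instead, given a missing face $M$ of dimension between $k$ and $d-k$, the paper chooses a $k$-subset $G\subset M$ and quotients by the complementary face $M\setminus G$. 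In the resulting quotient $Q$ (of dimension $\geq 2k-1$), $G$ becomes a missing $(k-1)$-face, and Conjecture~\ref{conj3} --- a statement about $k$-stresses (not $2$-stresses) witnessing a missing $(k-1)$-face --- supplies a $k$-stress on $\partial Q\cup\{G\}$ with the needed sign pattern, which Corollary~\ref{cor: cone lemma} lifts to a $k$-stress on $P$. No degree change ever occurs, so your ``first obstacle'' disappears once the reduction is set up correctly.

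Your ``second obstacle'' --- that one must iterate to recover missing faces of dimensions $k+1,\dots,\lfloor d/2\rfloor$ and that this requires $(k+j)$-stress data --- is a misreading of the situation. The single reduction above handles \emph{all} missing faces of dimension $k$ through $d-k$ at once (the size of $M\setminus G$ just grows with $\dim M$), and $d-k\geq\lfloor d/2\rfloor$ whenever $d\geq 2k$, so Perles--Dancis then finishes the reconstruction. There is no iteration and no need for higher-degree stress data. The one genuine gap, for both your sketch and the paper, is Conjecture~\ref{conj3} itself for $k\geq 3$: producing, for a missing $(k-1)$-face $G$ and a $(k-1)$-subset $F\subset G$, a $k$-stress $\lambda$ on $\partial P\cup\{G\}$ with $\lambda_G>0$ and $\lambda_{F\cup u}\leq 0$ for all $(k-1)$-faces $F\cup u$. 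For $k=2$ the paper does this via an extension of Dehn's lemma ($d=3$) and via Balinski's theorem plus the Gluing Lemma ($d\geq 4$), and it is this combinatorial-geometric step --- not any degree bookkeeping --- that currently has no analogue in higher degree. (Also a small off-by-one: since $\skel_{k-1}(\partial P)$ records all faces of size $\leq k$, missing faces of dimension $\leq k-1$, not just $\leq k-2$, are directly visible from the given data.)
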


We are about to strengthen this conjecture even more. This requires the following simple lemma.

\begin{lemma}\label{lm: missing face condition}
	Let $k\geq 2$ and $d\geq 2k$. Let $P\subset \R^d$ be a simplicial $d$-polytope, and let $M\subset V(\partial P)$ be a set of size $\geq k$. Assume there is a $(k-2)$-face $F\subset M$ of $\partial P$ and an affine $k$-stress $\lambda$ on $P$ such that for every $(k-1)$-face $G=F\cup v_{F,G}$ of $\partial P$ with $v_{F,G}\notin M$,  $\lambda_G\leq 0$ and at least one of these numbers $\lambda_G$ is negative. Then $M$ is not a face of $\partial P$. 
\end{lemma}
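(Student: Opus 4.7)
I would argue by contradiction. Suppose $M$ is a face of $\partial P$. Since the boundary complex of a polytope contains no face equal to the whole vertex set, $M\subsetneq V(\partial P)$, so $\conv(M)$ is a proper face of $P$. Choose a supporting hyperplane
\[
H'=\{x\in\R^d:\langle x,\emb{n}\rangle=c\}
\]
with $H'\cap P=\conv(M)$, orienting $\emb{n}$ so that $\langle p(v),\emb{n}\rangle>c$ for every vertex $v\notin M$. The plan is to test the balancing condition at $F$ against this normal $\emb{n}$.

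Since $F$ is a $(k-2)$-face of $\partial P$ and $\lambda\in\Stress_k(P)$, the balancing condition \eqref{bal-cond} reads
\[
\sum_{G:\,F\subset G\in\partial P,\,|G|=k}\lambda_G\,\pi_{F,G}=\emb{0}.
\]
The key geometric observation is that $\Aff(F)\subset H'$: indeed $F\subset M$ forces $p(F)\subset\conv(M)\subset H'$. Writing $\pi_{F,G}=p(v_{F,G})-q_G$, where $q_G$ is the orthogonal projection of $p(v_{F,G})$ onto $\Aff(F)$, we have $\langle q_G,\emb{n}\rangle=c$, so
\[
\langle\pi_{F,G},\emb{n}\rangle=\langle p(v_{F,G}),\emb{n}\rangle-c.
\]
This quantity vanishes when $v_{F,G}\in M\setminus F$ (because then $p(v_{F,G})\in H'$) and is strictly positive when $v_{F,G}\notin M$.

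Taking the inner product of the balancing identity with $\emb{n}$ therefore kills every term indexed by a vertex of $M\setminus F$, collapsing the identity to
\[
\sum_{G:\,F\subset G\in\partial P,\,|G|=k,\,v_{F,G}\notin M}\lambda_G\,\langle\pi_{F,G},\emb{n}\rangle=0.
\]
By hypothesis each $\lambda_G$ appearing in this sum is nonpositive, at least one is strictly negative, and every factor $\langle\pi_{F,G},\emb{n}\rangle$ is strictly positive; the left-hand side is then strictly negative, a contradiction. Hence $M$ is not a face of $\partial P$. The only point requiring care is the orthogonal decomposition that makes the $v_{F,G}\in M\setminus F$ contributions drop out, which follows directly from $\Aff(F)\subset H'$; I do not anticipate a substantive obstacle beyond setting up the supporting hyperplane correctly.
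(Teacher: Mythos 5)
Your proof is correct and follows essentially the same approach as the paper: both assume $M$ is a face, choose a supporting hyperplane with $H\cap P=\conv(M)$, take the dot product of the balancing condition at $F$ with the inward normal, and observe that terms with $v_{F,G}\in M$ vanish while the remaining sum is strictly negative. The only cosmetic difference is that you spell out explicitly why the $v_{F,G}\in M\setminus F$ contributions drop out (the orthogonal projection $q_G$ lies in $\Aff(F)\subset H'$), a step the paper passes over with ``keeping in mind that $H$ defines $M$.''
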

\begin{proof}
	Assume that $M$ is a face. Let $H=\{x: x\cdot b=\alpha\}$ be a hyperplane that defines $M$, that is, we assume that $P\subseteq \{x:x \cdot b\geq \alpha\}$ and that $H\cap P=\conv(M)$. (Here $x\cdot b$ denotes the dot product.) By the balancing condition on $\lambda$ at $F$, see \eqref{bal-cond}, $$\mathbf{0}=\sum_{G: \; F\subset G,\; |G|=k}\lambda_G\pi_{F, G}=\sum_{G: \;v_{F, G}\in M}\lambda_G\pi_{F, G} + \sum_{G: \;v_{F, G}\notin M}\lambda_G\pi_{F, G}.$$ 
	Computing the dot product with $b$ and keeping in mind that $H$ defines $M$, we obtain 
	\begin{equation*}
		\begin{split}
			0=\sum_{G: \;v_{F, G}\in M}\lambda_G(\pi_{F, G}\cdot b) + \sum_{G: \;v_{F, G}\notin M}\lambda_G(\pi_{F, G}\cdot b)=\sum_{G: \;v_{F, G}\notin M}\lambda_G(\pi_{F, G}\cdot b)<0,
		\end{split}
	\end{equation*}
	which is a contradiction. (In the last step we used that for faces $G$ with $v_{F,G}\notin M$,  $\pi_{F,G}\cdot b>0$ while $\lambda_G\leq 0$ and at least one of the numbers $\lambda_G$ is negative.)
\end{proof}

If the converse of Lemma \ref{lm: missing face condition} holds, then the sign vectors in $\V_k(P)$ completely determine the set of all missing faces $M$ with $k\leq \dim M\leq d-1$. 
This motivates the following conjecture.

\begin{conjecture}\label{conj2}
Let $k\geq 2$ and $d\geq 2k$. Let $P$ be a simplicial $d$-polytope and let $M$ be a missing face of $\partial P$ with $k\leq \dim M \leq d-k$. Then there exists a $(k-2)$-face $F\subset M$ and an affine $k$-stress $\lambda$ on $P$ with the property that 
\begin{enumerate}
\item[$(*)$]for every $(k-1)$-face $G=F\cup v_{F,G}$ of $\partial P$, $\lambda_G > 0$ if $v_{F,G}\in M$ while $\lambda_G\leq 0$ if $v_{F,G}\notin M$. 
\end{enumerate}
\end{conjecture}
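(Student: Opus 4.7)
The plan is to reduce the sign-pattern question to producing an affine dependence with prescribed signs on the vertices of a quotient polytope, and then to realize that dependence as a $k$-stress on $\partial P$ via rigidity. Fix any $(k-2)$-face $F\subset M$, which exists because $|M|\geq k+1$. Pass to the quotient polytope $Q=P/F$ of dimension $d-k+1$ with natural embedding $q$, and set $M':=M\setminus F$. A short check shows $M'$ is a missing face of $\partial Q=\lk_{\partial P}(F)$: every proper subset $S\subsetneq M'$ gives $S\cup F\subsetneq M$, a face of $\partial P$, hence $S\in\partial Q$, while $F\cup M'=M\notin\partial P$ forces $M'\notin\partial Q$. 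The hypothesis $k\leq\dim M\leq d-k$ translates to $1\leq\dim M'\leq d-2k+1$.

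Next, unpack the balancing \eqref{bal-cond} at $F$. After translating so that $\Aff(p(F))$ passes through the origin, each altitude vector $\pi_{F,F\cup w}$ is a positive scalar multiple of the position vector $q(w)$ of the corresponding vertex of $Q$. Hence for any $k$-stress $\lambda\in\Stress_k(\partial P)$, the coefficient vector $(\lambda_{F\cup w})_w$ on the edges through $F$ has the same sign pattern as an affine dependence $(c_w)_w$ on $V(\partial Q)$ satisfying $\sum_w c_w q(w)=\mathbf{0}$ and $\sum_w c_w=0$. Producing the desired stress therefore splits into two sub-tasks: \emph{(a)} construct an affine dependence with $c_w>0$ for $w\in M'$ and $c_w\leq 0$ for $w\in V(\partial Q)\setminus M'$; and \emph{(b)} realize this dependence as $(\lambda_{F\cup w})_w$ for some $\lambda\in\Stress_k(\partial P)$.

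Task (a) is equivalent to $\relint(\conv(q(M')))\cap\conv(q(V(\partial Q)\setminus M'))\neq\emptyset$. Because $M'$ is a missing face of the simplicial polytope $Q$, $\conv(q(M'))$ lies in no facet of $Q$, so $\relint(\conv(q(M')))$ pierces $\intr(Q)$. Upgrading this to an intersection with $\conv(q(V(\partial Q)\setminus M'))$ is the geometric heart of the argument; the natural route is Gale duality of $Q$, where missing faces correspond to positive circuits of the Gale transform supported on $M'$, and such a circuit directly yields the desired signed dependence.

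For task (b) with $k=2$, write $F=\{v\}$ and combine Whiteley's rigidity (Theorem \ref{Whiteley-thm}) with the Gluing Lemma and Balinski's theorem to show that the framework obtained from $G(P)$ by removing $v$ together with its incident edges is still infinitesimally rigid in $\R^d$; a dimension count then gives surjectivity of the projection $\Stress_2(\partial P)\to\{\text{affine dependences on }V(\partial Q)\}$, $\lambda\mapsto(\lambda_{vw})_w$, so the dependence from task (a) is realized by a genuine $2$-stress. For $k$-neighborly polytopes the toy-example construction at the start of Section 4 applies directly: a $1$-stress $\lambda_0$ on $\partial P$ matching the dependence can be raised to $\lambda_0^k\in\Stress_k(\partial P)$, valid because every $k$-subset of $V(\partial P)$ is a face. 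The case of general $P$ with $k\geq 3$ remains genuinely open, and I expect the main obstruction there to be simultaneously producing the signed dependence on $\partial Q$ and promoting it to a global $k$-stress realization on $\partial P$.
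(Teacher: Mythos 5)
Your proposal takes a genuinely different route from the paper, and it is worth saying up front that the statement in question is a \emph{conjecture}: the paper does not prove it in full generality, but only in the cases $k=2$ (for all simplicial $d$-polytopes, $d\geq 2k$) and $k$-neighborly polytopes for arbitrary $k$. Your closing admission that the case $k\geq 3$ for general $P$ ``remains genuinely open'' is therefore accurate and matches the paper's own state of knowledge.

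The main structural difference: the paper first introduces Conjecture~\ref{conj3} (about a missing \emph{$(k-1)$-face} $G$ of $\partial P$ and a $k$-stress on $\partial P\cup\{G\}$ with $\lambda_G>0$ and $\lambda_{F\cup u}\leq 0$), then shows Conjecture~\ref{conj3}~$\Rightarrow$~Conjecture~\ref{conj2} by writing $M=\{x_0,\ldots,x_\ell\}$, setting $F=\{x_1,\ldots,x_{k-1}\}$, $G=F\cup x_0$, and \emph{quotienting by $M\setminus G$} (a $(\ell-k)$-face), then lifting via the Cone Lemma (Corollary~\ref{cor: cone lemma}). You instead \emph{quotient by $F$ itself} (a $(k-2)$-face) and recast the balancing condition at $F$ as an affine dependence on $V(\partial Q)$. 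Both reductions are sound, but they are not the same and they leave behind different residual problems: the paper reduces to a statement about missing $(k-1)$-faces in a $(d-1-(\ell-k))$-polytope, while you reduce to a pair of sub-tasks about a $(d-k+1)$-dimensional quotient.

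On your Task~(b) for $k=2$: the paper does \emph{not} argue that the projection $\Stress_2(\partial P)\to\Stress_1(\partial Q)$, $\lambda\mapsto(\lambda_{vw})_w$, is surjective. Instead, Lemma~\ref{lm:k=2,d>3} carefully constructs a \emph{small} infinitesimally rigid subgraph $G'\cup ab\subseteq G(P)\cup ab$ by gluing stars along the paths supplied by Lemma~\ref{lm: intersecting convex hulls}, and then uses that in $G'$ the vertex $a$ has exactly $d$ neighbours $C'$ whose convex hull pierces $[a,b]$; the balancing condition at $a$ then \emph{forces} the signs $\lambda_{ac}\leq 0$. In your version the crucial claim that $G(P)-v$ (the antistar graph) remains infinitesimally rigid in $\R^d$ is not established by the tools you name; ``Whiteley plus Gluing plus Balinski'' does not directly produce it (you would instead need a Bruggesser--Mani shelling with $v$ last, or a strong-connectedness argument on the dual graph restricted to facets avoiding $v$). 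Your dimension count is correct conditional on that rigidity, so the idea is not unreasonable, but it is a different and as-yet-unfinished argument. For the $k$-neighborly case your $\phi^k$ construction is the same as the paper's Theorem~\ref{thm:k-neighb}. Finally, your Task~(a) claim that $\relint(\conv(q(M')))\cap\conv(q(V(\partial Q)\setminus M'))\neq\emptyset$ is true and is the same separation fact the paper invokes in Theorem~\ref{thm:k-neighb}, but you should supply the one-line proof (if the relative interiors were disjoint, a properly separating hyperplane would realize $M'$ as a face of the simplicial polytope $Q$) rather than defer to an unstated Gale-duality dictionary.
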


\begin{lemma}
	Conjecture \ref{conj2} implies Conjecture \ref{conj1}.
\end{lemma}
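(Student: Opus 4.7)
My plan is to use the fact that $\partial P$ is determined by its set of missing faces and to recover these one dimension at a time. From $\skel_{k-1}(\partial P)$ we immediately read off all missing faces of dimension at most $k-1$; for the missing faces of dimension in $[k,d-k]$ we will combine Conjecture~\ref{conj2} (as a detection tool) with a converse of Lemma~\ref{lm: missing face condition} (as a certificate that a detected set is indeed a non-face). Once the missing faces of dimension $\leq d-k$ are known, the entire $(d-k)$-skeleton of $\partial P$ is known; since $d\geq 2k$ forces $d-k\geq\lfloor d/2\rfloor$, the Perles--Dancis theorem then recovers $\partial P$ from this skeleton.

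The central claim I would prove is the following criterion. Let $M\subseteq V(\partial P)$ be a set with $k\leq|M|-1\leq d-k$ all of whose proper subsets are faces of $\partial P$. Then $M$ is a missing face if and only if there exist a $(k-2)$-face $F\subset M$ and a sign vector in $\V_k(P)$ that equals $+$ on every $G=F\cup v$ with $v\in M\setminus F$ and is non-positive on every $G=F\cup v\in\partial P$ with $v\notin M$. Both conditions of the criterion are testable from $\skel_{k-1}(\partial P)$ and $\V_k(P)$ alone. The ``only if'' direction is exactly Conjecture~\ref{conj2}. For ``if,'' suppose such a sign vector with realizing stress $\lambda$ exists but, for contradiction, $M$ is a face; following the proof of Lemma~\ref{lm: missing face condition}, take the hyperplane $H=\{x\cdot b=\alpha\}$ with $H\cap P=\conv(M)$ and dot the balancing relation $\sum_G\lambda_G\pi_{F,G}=\mathbf{0}$ at $F$ with $b$. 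Because $\Aff(F)\subset H$, the contributions with $v\in M$ vanish, while the contributions with $v\notin M$ are each non-positive and sum to $0$, forcing $\lambda_G=0$ for every $v\notin M$. The leftover equation $\sum_{v\in M\setminus F}\lambda_G\pi_{F,G}=\mathbf{0}$ with strictly positive $\lambda_G$ then contradicts linear independence of the altitudes $\{\pi_{F,G}:v\in M\setminus F\}$, which in turn follows from the affine independence of $\{p(v):v\in M\}$ (a consequence of $M$ being a face) via a short rewriting $q_v=\sum_{u\in F}\alpha_{v,u}p(u)$ with $\sum_u\alpha_{v,u}=1$ that produces a nontrivial affine dependence $\sum_{v\in M}a_v p(v)=\mathbf{0}$, $\sum a_v=0$.

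Granted this criterion, I would proceed inductively on the dimension $i=k,k+1,\ldots,d-k$: once the missing faces of dimension $<i$ are identified, a candidate $i$-dimensional missing face is exactly an $(i+1)$-set whose proper subsets are already known to be faces, and the sign-vector test classifies each candidate as either a face or a missing face. After reaching $i=d-k$, the $(d-k)$-skeleton of $\partial P$ has been reconstructed, and a single application of the Perles--Dancis theorem completes the identification of $\partial P$.

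The main obstacle I anticipate is the ``if'' half of the criterion --- specifically, ensuring that the positive-coefficient balancing at $F$ cannot hold once $\lambda_G=0$ has been forced for all $v\notin M$. This replaces the explicit hypothesis ``at least one $\lambda_G<0$ for $v\notin M$'' in Lemma~\ref{lm: missing face condition}, and ultimately reduces to linear independence of the altitude vectors on a simplex. This is the key geometric input that glues the whole reduction together.
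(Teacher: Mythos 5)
Your proposal is correct, and its high-level blueprint is the one the paper uses: Conjecture~\ref{conj2} detects missing faces of dimensions $k$ through $d-k$, a hyperplane argument certifies that the detected sets are non-faces, and Perles--Dancis promotes the resulting $(d-k)$-skeleton to all of $\partial P$. You differ from the paper in two ways worth noting. Organizationally, you recover missing faces one dimension at a time by induction, whereas the paper collects in one sweep the family $\M$ of all $M$ with $k+1\leq|M|\leq d-k+1$ whose $(k-1)$-skeleton lies in $\skel_{k-1}(\partial P)$ and which admit a triple $(M,F,\lambda)$ satisfying $(*)$, then reads off the missing faces as the inclusion-minimal elements of $\M$; both bookkeeping schemes work. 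More substantively, your certificate that a detected $M$ cannot be a face is self-contained: a single hyperplane (one supporting $\conv(M)$) dotted into the balancing relation at $F$ forces $\lambda_{F\cup v}=0$ for every $v\notin M$, after which the strictly positive balancing among $v\in M\setminus F$ would contradict the linear independence of the altitude vectors $\{\pi_{F,G}:v\in M\setminus F\}$, which holds because $M$, being a face of the simplicial polytope, is a simplex. The paper instead observes (via a second hyperplane, one supporting $\conv(F)$) that condition $(*)$ forces some $\lambda_{F\cup v}<0$ with $v\notin M$, and then invokes Lemma~\ref{lm: missing face condition} directly. Your version trades the citation of that lemma for a short independence computation; the paper's version reuses a result it already needs elsewhere. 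Both are valid and give the same reconstruction criterion, so there is no gap here.
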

\begin{proof}
Assume that a simplicial $d$-polytope $P$ satisfies the statement of Conjecture \ref{conj2}. Use the complex $\skel_{k-1}(\partial P)$ and the set $\V_k(P)$ to find all subsets $M\subset V(\partial P)$, $k+1\leq |M|\leq d-k+1$ such that (1) $\skel_{k-1}(\overline{M})\subseteq \skel_{k-1}(\partial P)$, and (2) there is a $(k-1)$-subset $F$ of $M$ and a $k$-stress $\lambda$ such that the triple $(M,F,\lambda)$ satisfies condition $(*)$ of Conjecture \ref{conj2}. Let $\M$ be the collection of all such $M$. Note that if $M\in \M$ and 
$(M,F,\lambda)$ is a triple satisfying condition $(*)$, then the balancing condition on $\lambda$ at $F$ guarantees that $\lambda_G$ is negative for at least one face $G=F\cup v_{F,G}$. (To see this, perform the same computation as in the proof of Lemma \ref{lm: missing face condition}, but using a hyperplane $H'$ that defines $F$.) Hence by Lemma \ref{lm: missing face condition}, no element $M$ of $\M$ is a face of $\partial P$. Our assumption that Conjecture \ref{conj2} holds then implies that the minimal (w.r.t~inclusion) elements of $\M$ are precisely the missing faces of $\partial P$ of dimensions between $k$ and  $d-k$. This allows us to reconstruct $\skel_{d-k}(\partial P)$. The result of Perles and Dancis, \cite{Dancis}, then allows us to reconstruct the entire complex $\partial P$. 
\end{proof}

We now propose another conjecture that implies Conjecture \ref{conj2} and hence also Conjecture \ref{conj1}.

\begin{conjecture}\label{conj3}
	Let $k\geq 2$ and let $P\subset \R^d$ be a simplicial polytope of dimension $d\geq 2k-1$. If $G$ is a missing $(k-1)$-face of $\partial P$ and $F$ is a $(k-1)$-subset
	of $G$, then there exists an affine $k$-stress $\lambda$ on $(\partial P\cup \{G\}, p)$ such that $\lambda_{G}>0$ and $\lambda_{F\cup u}\leq 0$ for every $(k-1)$-face $F\cup u$ of $\partial P$. 
\end{conjecture}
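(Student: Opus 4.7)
My plan is to reduce the conjecture to an affine dependence statement in the quotient polytope of $P$ by $F$, and then to realize this local combinatorial data as the values of a global $k$-stress on $\partial P\cup\{G\}$. Throughout, let $v$ denote the unique vertex of $G\setminus F$.

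For the geometric reduction, I would place $F$ at the origin and choose coordinates as in Lemma~\ref{cone-lemma1}, so that every vertex $u$ with $F\cup u\in\partial P$ has positive ``height'' $a_u>0$ and the quotient polytope $Q\subset\R^{d-k+1}$ has vertices $q(u)=v_u/a_u$ obtained by radially projecting the $p(u)$ to a hyperplane $H$. The key observation is that the missing vertex $v$ also has $a_v>0$, so $q(v)=p(v)/a_v\in H$ is well-defined; and since both $v$ and $\mathbf{0}=p(F)$ lie in $P$, the segment $[\mathbf{0},p(v)]$ is contained in $P$ by convexity, forcing $q(v)\in P\cap H=Q$. Hence $q(v)=\sum_u\alpha_u q(u)$ with $\alpha_u\geq 0$ and $\sum_u\alpha_u=1$. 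After unwinding the scalings $a_u,a_v$, this convex decomposition translates, via the balancing condition \eqref{bal-cond} at $F$, into real numbers $\lambda_G>0$ and $\lambda_{F\cup u}\leq 0$ that solve the local balance $\lambda_G\pi_{F,G}+\sum_u\lambda_{F\cup u}\pi_{F,F\cup u}=\mathbf{0}$ with exactly the signs demanded by the conjecture.

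To realize these prescribed local values as an actual $k$-stress on $\partial P\cup\{G\}$, I would first invoke a higher-degree analog of Corollary~\ref{cor: add missing edge}---which, in the simplicial polytope setting, follows from the strong Lefschetz property / the $g$-theorem stated at the end of Section~3---to produce some $k$-stress $\lambda^{(0)}$ on $\partial P\cup\{G\}$ with $\lambda^{(0)}_G>0$; its values $\lambda^{(0)}_{F\cup u}$ will in general not have the desired signs. I would then correct $\lambda^{(0)}$ by an element $\mu\in\Stress_k(\partial P)$ with $\mu_G=0$ and $\mu_{F\cup u}=\lambda_{F\cup u}-\lambda^{(0)}_{F\cup u}$, where the $\lambda_{F\cup u}$ are the values from Step~1. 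The feasibility of this correction reduces to the surjectivity of the restriction map $\Stress_k(\partial P)\to\R^{|V(\lk(F))|}$, $\mu\mapsto(\mu_{F\cup u})_u$, onto the balance-at-$F$ subspace $\{\xi:\sum_u\xi_u\pi_{F,F\cup u}=\mathbf{0}\}$.

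This surjectivity---a $k$-analog of infinitesimal rigidity localized at the face $F$---is the main obstacle. For $k=2$, one can hope to establish it by combining Whiteley's Theorem~\ref{Whiteley-thm}, the Gluing Lemma (Lemma~\ref{gluing-lemma}), and Corollary~\ref{cor: cone lemma} applied with $\tau=F$ and $\Delta=\lk_{\partial P}(F)$: $2$-stresses on the link of a vertex lift to $2$-stresses on the star, and gluing extends them to all of $\partial P$ while keeping the apical values controlled. For general $k$, the analogous rigidity is not readily available; one would likely need either a refinement of Lemma~\ref{cone-lemma1} that explicitly tracks the signs of apical coefficients through the recursion $\omega_{j+1}=-\tfrac{1}{j+1}\partial_{\theta'_d}\omega_j$, or an inductive argument on $k$. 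This is precisely the step that I expect to remain open outside the special cases addressed in Section~5 of the paper.
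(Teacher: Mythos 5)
Your Step 1 — projecting onto the quotient by $F$ and extracting a convex decomposition of $q(v)$ — does construct a collection of local values $\lambda_G>0$, $\lambda_{F\cup u}\leq 0$ satisfying the single balancing condition \eqref{bal-cond} at $F$. (For $k\geq 3$ the reduction is by an iterated vertex-figure construction and needs a bit more care than you give, since the segment $[p(F),p(v)]$ doesn't literally make sense when $|F|>1$; but the idea that $q(v)\in Q\setminus\{\text{vertices of }Q\}$ is sound, essentially because $G$ is a \emph{missing} face so all proper subsets of $G$ are faces.) This part mirrors the toy-example computation in Section~4 of the paper.

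The genuine gap is in Step 2, and it is not merely a technical obstacle to be overcome by ``more rigidity.'' You need to globalize a one-balancing-condition local solution to an honest element of $\Stress_k(\partial P\cup\{G\},p)$, and you propose to do this by first taking any $\lambda^{(0)}$ with $\lambda^{(0)}_G>0$ and then correcting by some $\mu\in\Stress_k(\partial P)$ whose values on the $(k-1)$-faces through $F$ are prescribed. The feasibility of this requires the restriction map $\Stress_k(\partial P)\to\{\xi:\sum_u\xi_u\pi_{F,F\cup u}=\mathbf{0}\}$ to be surjective, and this is false even for $k=2$: a stacked $d$-polytope has $g_2(P)=0$, hence $\Stress_2(\partial P)=\{0\}$, while the balance-at-$a$ subspace is typically positive-dimensional. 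Yet Conjecture \ref{conj3} is still true in that case, which shows the correct mechanism cannot be ``prescribe-then-correct.''

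The paper's proof of the $k=2$ case (Theorem \ref{thm:k=2}, via Lemmas \ref{lm: identify missing 2-faces}--\ref{lm:k=2,d>3}) proceeds differently and more subtly. For $d=3$ it is a Dehn-type sign-counting argument. For $d\geq 4$ it does not try to control an arbitrary $2$-stress on all of $G(P)\cup ab$; instead it uses Balinski's theorem plus Carath\'eodory (Lemmas \ref{lm: Balinski}, \ref{lm: intersecting convex hulls}) to locate a $d$-element affinely independent subset $C'\subseteq V(\lk(a))$ with $\conv(C')\cap[a,b]\neq\emptyset$, builds an infinitesimally rigid subgraph $G'\subseteq G(P)$ by gluing vertex stars along paths so that $a$ is incident in $G'\cup ab$ to \emph{exactly} the $d+1$ edges $ab$ and $ac$, $c\in C'$, and then observes that the balancing condition at $a$ in $G'\cup ab$ \emph{uniquely determines} the values $\lambda_{ac}$ and forces them to be $\leq 0$. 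In other words, the freedom is obtained by throwing away edges, not by adding a correcting stress. Your acknowledgement that the general-$k$ step remains open is accurate; but the specific surjectivity you propose as the route to $k=2$ does not hold, and is not what the paper does.
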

\noindent Note that the balancing condition on $\lambda$ at $F$ implies that $\lambda_{F\cup u}<0$ for at least one face $F\cup u\in\partial P$.

\begin{lemma}
	Conjecture \ref{conj3} implies Conjecture \ref{conj2}.
\end{lemma}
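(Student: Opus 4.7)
The plan is to deduce Conjecture \ref{conj2} from Conjecture \ref{conj3} by quotienting $P$ to transform the missing face $M$ into a missing $(k-1)$-face, and then lifting the resulting stress back to $P$ via the Cone Lemma (Corollary \ref{cor: cone lemma}).

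Given a missing face $M$ of $\partial P$ with $k\le d_M:=\dim M\le d-k$, I choose $\tau\subsetneq M$ with $|\tau|=d_M-k+1$, so that $G:=M\setminus \tau$ has cardinality $k$. Since $\tau$ is a proper subset of the missing face $M$, we have $\tau\in\partial P$. A short verification shows that in the quotient $Q=P/\tau$, the set $G$ is a missing $(k-1)$-face of $\partial Q$: each proper subset $S\subsetneq G$ satisfies $\tau\cup S\subsetneq M\in\partial P$, so $S\in\partial Q$, while $\tau\cup G=M\notin\partial P$ forces $G\notin\partial Q$. Moreover $\dim Q=d-|\tau|=d-d_M+k-1\ge 2k-1$ by the hypothesis $d_M\le d-k$. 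Picking a $(k-1)$-subset $F_Q\subset G$ and setting $v_0:=G\setminus F_Q$, I apply Conjecture \ref{conj3} to $(Q,G,F_Q)$ to produce an affine $k$-stress $\bar\lambda$ on $(\partial Q\cup\{G\},q)$ with $\bar\lambda_G>0$ and $\bar\lambda_{F_Q\cup u}\le 0$ for every $(k-1)$-face $F_Q\cup u$ of $\partial Q$.

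Setting $\Delta:=\partial Q\cup\{G\}$, I verify the hypothesis of Corollary \ref{cor: cone lemma}: any face $T\cup L\in\overline\tau*\Delta$ of size at most $k$ lies in $\partial P$, because either $L\in\partial Q$ and $T\cup L\subseteq \tau\cup L\in\partial P$, or else $L=G$, which forces $T=\emptyset$ and $T\cup L=G\subsetneq M$. The corollary then produces a $k$-stress $\lambda$ on $(\overline\tau*\Delta,p)$ agreeing in sign with $\bar\lambda$ on every $(k-1)$-face of $\Delta$. Because every monomial of $\lambda$ is supported on $\skel_{k-1}(\overline\tau*\Delta)\subseteq\partial P$, I may view $\lambda$ as a $k$-stress on $(\partial P,p)$ (extended by zero on $(k-1)$-faces of $\partial P$ outside $\overline\tau*\Delta$; a brief check shows that balancing at each $(k-2)$-face of $\partial P$ outside $\overline\tau*\Delta$ holds trivially, since no $(k-1)$-face containing it lies in $\overline\tau*\Delta$). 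I then take $F:=F_Q$ in Conjecture \ref{conj2}. The easy cases are immediate: for $v=v_0$ we get $\lambda_G>0$; for $v\notin M$ with $F_Q\cup v\in\partial P$, either $F_Q\cup v\in\partial Q$ (so $\lambda_{F_Q\cup v}\le 0$ by sign-matching with $\bar\lambda_{F_Q\cup v}\le 0$) or $F_Q\cup v\in\partial P\setminus\partial Q$ (so $\lambda_{F_Q\cup v}=0$ by the extension).

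The main obstacle is the remaining case $v\in\tau$: here $F_Q\cup v\subsetneq M$ is a $(k-1)$-face of $\partial P$ lying in $\overline\tau*\Delta\setminus\Delta$, so Corollary \ref{cor: cone lemma} does not directly constrain the sign of $\lambda_{F_Q\cup v}$. I expect this to be the crux of the argument. My plan is to compute $\lambda_{F_Q\cup v}$ explicitly using the recursive formula $\omega_{j+1}=-\tfrac{1}{j+1}\partial_{\theta'_d}\omega_j$ from Lemma \ref{cone-lemma1} (applied iteratively over the vertices of $\tau$), and then to use the balancing of $\bar\lambda$ at $F_Q$ in $Q$ together with the strict positivity of $\bar\lambda_G$ (and the non-positivity of the other $\bar\lambda_{F_Q\cup u}$) to force $\lambda_{F_Q\cup v}>0$; a fallback, should this direct computation not suffice, is to average several stresses obtained from Conjecture \ref{conj3} with varying subsets $F_Q\subset G$, or to exploit the freedom in the choice of the vertex-figure hyperplane used in the cone-lift to secure the desired sign.
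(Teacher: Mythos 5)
Your setup is essentially the same as the paper's: quotient $P$ by $\tau = M\setminus G$ (so $|\tau|=\dim M -k+1$), observe that $G$ becomes a missing $(k-1)$-face of the quotient $Q$ of dimension $\geq 2k-1$, apply Conjecture~\ref{conj3} to get $\bar\lambda$ on $(\partial Q\cup\{G\},q)$, and lift via Corollary~\ref{cor: cone lemma}. Your verifications that $\skel_{k-1}(\overline\tau*\Delta)\subseteq\partial P$ and that $\lambda$ extends by zero to a $k$-stress on $P$ are correct, as is the sign analysis for $v\notin M$.

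However, you have a genuine gap at precisely the step you flag as ``the crux'': controlling the sign of $\lambda_{F\cup v}$ for $v\in\tau=M\setminus G$. Neither of your proposed fixes is what is needed, and I do not think the direct recursive computation from Lemma~\ref{cone-lemma1} gives a clean sign (the iterated derivatives $\partial_{\theta'_d}$ mix contributions in a way that is not obviously sign-definite). The paper's key idea, which you are missing, is to \emph{not} try to compute those signs directly. Instead, define
$$M' := F\cup\{u\in M\setminus F \ : \ \lambda_{F\cup u}>0\}.$$
Then $G\subseteq M'\subseteq M$, and by what you have already established, for every $(k-1)$-face $F\cup v\in\partial P$ with $v\notin M'$ one has $\lambda_{F\cup v}\leq 0$ (either $v\notin M$, handled above, or $v\in M\setminus M'$, handled by the definition of $M'$), with at least one strict inequality (from balancing at $F$ together with $\lambda_G>0$). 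Thus the triple $(M',F,\lambda)$ satisfies the hypotheses of Lemma~\ref{lm: missing face condition}, so $M'$ is \emph{not} a face of $\partial P$. But $M'\subseteq M$ and $M$ is a missing face, so every proper subset of $M$ is a face; hence $M'=M$, which is exactly condition~$(*)$. This self-referential use of Lemma~\ref{lm: missing face condition} is the missing ingredient in your argument.
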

\begin{proof}
Let $P$ be a simplicial $d$-polytope, let $M=\{x_0, x_1, \dots, x_{\ell}\} \subset V(\partial P)$ be a missing face of $\partial P$, where $k\leq \ell\leq d-k$, and let $F=\{x_1, \dots, x_{k-1}\}$. We want to find an affine $k$-stress $\lambda$ on $P$ so that $(M,F,\lambda)$ satisfies condition $(*)$ of  Conjecture \ref{conj2}. To start, note that $F$,
$G:=F\cup x_0$, and $M\backslash G=\{x_k,\ldots,x_\ell\}$ are faces of $\partial P$, but $G$ is a missing face of $\lk_{\partial P}(M\backslash G)$. Let $Q$ be a quotient polytope of $P$ by $M\backslash G$ (with its natural embedding $q$). Then $Q$ has dimension $d-1-(\ell-k)\geq 2k-1$ and $G$ is a missing $(k-1)$-face of $\partial Q=\lk_{\partial P}(M\backslash G)$. 

Our assumption that Conjecture \ref{conj3} holds implies the existence of a $k$-stress $\bar{\lambda}$ on $(\partial Q \cup \{G\}, q)$ such that $\bar{\lambda}_{G}>0$ while $\bar{\lambda}_{F\cup u}\leq 0$ for every $(k-1)$-face $F\cup u\in \partial Q$. Applying Corollary \ref{cor: cone lemma} to this stress $\bar{\lambda}$, provides us with a $k$-stress $\lambda$ on $\st_{\partial P}(M\backslash G) \cup \{G\} \subset \partial P$ with the property that for every $(k-1)$-face $\tau\in \lk_{\partial P}(M\backslash G)\cup \{G\}$, $\bar{\lambda}_{\tau}$ and $\lambda_{\tau}$ have the same sign. 
In particular, $\lambda$ is a $k$-stress on $P$ that satisfies $\lambda_{G}>0$ and $\lambda_{F\cup u}\leq 0$ for all faces $F\cup u\in\partial P$ with $u\in V(\partial P)\backslash M$. (Note that $\lambda_{F\cup u}=0$ if $u\notin \st_{\partial P}(M\backslash G)$.)

	Let $M':=F \cup \{u\in M\backslash F :  \lambda_{F\cup u}> 0\}$. To complete the proof, it remains to show that $M'=M$. To do so, note that $M'$ contains $G=F\cup x_0$ and that the triple $(M',F, \lambda)$ satisfies all the assumptions of Lemma \ref{lm: missing face condition} (including the assumption that $\lambda_{F\cup v}$ is strictly negative for at least one face $F\cup v$, $v\notin M'$; this follows from the balancing condition at $F$ and the fact that $\lambda_{G}>0$).
	Hence by Lemma \ref{lm: missing face condition}, $M'$ is not a face. As $M'$ is contained in the missing face $M$ of $\partial P$, it follows that $M=M'$. 
\end{proof}

\section{Two cases of Kalai's conjecture}
The goal of this section is twofold. We first prove that the strongest of the three conjectures discussed in the previous section, namely Conjecture \ref{conj3}, holds for the case of $k=2$ and all simplicial polytopes of dimension $\geq 3$. We then show that Conjecture \ref{conj2} holds for all $k$-neighborly polytopes for an arbitrary $k$. This establishes the validity of Conjecture \ref{conj:Kalai} in these two cases.

\subsection{The case of $k=2$}
To verify the $k=2$ case of Conjecture \ref{conj3} for simplicial polytopes of dimension $d\geq 3$, we separately treat the cases of $d=3$ and $d>3$. The $d=3$ case is established by the following result that can be considered as an extension of Dehn's lemma. The proof is almost identical to one of the proofs of Dehn's lemma (see  \cite[Theorem 6.17]{Lee-notes} and also \cite[Ch.~26.3 \& 32.3]{Pak-book}).

\begin{lemma}\label{lm: identify missing 2-faces}
	Let $P$ be a simplicial 3-polytope and let $ab$ be a missing edge of $\partial P$. Then there is a unique affine 2-stress $\lambda$ on $G(P) \cup ab$ such that $\lambda_{ab}=1$; this stress satisfies $\lambda_{e}\leq 0$ for every edge $e\in G(P)$ that is incident to $a$ or $b$.
\end{lemma}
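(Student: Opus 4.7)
The lemma splits into two parts: existence and uniqueness of the normalized stress $\lambda$, and the sign inequality. I would argue these separately.

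For uniqueness, I would apply Theorem \ref{Whiteley-thm} in its $d=3$ case (i.e., Dehn's lemma itself): the framework $(G(P), p)$ is infinitesimally rigid in $\R^3$. Combined with the simplicial-3-polytope identity $f_1(P) = 3 f_0(P) - 6$ and Theorem \ref{thm: rigidity equivalence}, this forces $\dim \Stress_2(G(P), p) = f_1 - 3f_0+ 6 = 0$. Adding the missing edge $ab$ keeps the framework infinitesimally rigid (enlarging a rigid graph can only add redundancy), so Corollary \ref{cor: add missing edge} supplies an affine $2$-stress on $(G(P) \cup \{ab\}, p)$ with $\lambda_{ab} \neq 0$. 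Applying Theorem \ref{thm: rigidity equivalence} once more, now to the enlarged framework, gives $\dim \Stress_2(G(P) \cup \{ab\}, p) = (f_1+1) - 3f_0 + 6 = 1$, and normalizing $\lambda_{ab} = 1$ determines $\lambda$ uniquely.

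For the sign condition, by the symmetric roles of $a$ and $b$ it suffices to prove $\lambda_{au} \leq 0$ for every edge $au \in G(P)$. I would translate so that $p(a) = \emb{0}$; the balancing condition \eqref{bal-cond} at $F = \{a\}$ then reads
\[ \sum_{u\sim a} \lambda_{au}\, p(u) \;=\; -p(b). \]
Following the setup of Lemma \ref{cone-lemma1}, I would fix a hyperplane $H$ with unit normal $n$ satisfying $n\cdot p(v) > 0$ for every vertex $v \neq a$, and pass to the vertex-figure embedding $u' := p(u)/(n\cdot p(u))$, $b' := p(b)/(n\cdot p(b))$. The points $\{u' : u\sim a\}$ are the vertices of a convex polygon $Q\subset H$, and $b'$ lies strictly in $\intr Q$: the segment $[p(a),p(b)]$ enters $\intr P$ since $ab$ is missing and $P$ is convex. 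Rescaling the displayed balancing by the positive numbers $n\cdot p(u)$ and $n\cdot p(b)$ (together with taking the $n$-component for the sum condition) converts it into an affine dependence
\[ \sum_{u\sim a} \tilde{\lambda}_u\, u' \;=\; b', \qquad \sum_{u} \tilde{\lambda}_u = 1, \]
where $\tilde{\lambda}_u$ has sign opposite to $\lambda_{au}$. Hence $\lambda_{au}\leq 0$ for all $u\sim a$ is equivalent to this affine combination being a \emph{convex} combination of the vertices of $Q$.

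The hard part is exactly this last step. Since $b' \in \intr Q$, a convex combination of $\{u'\}$ expressing $b'$ always exists, but affine ones form a $(\deg_{G(P)}(a)-3)$-dimensional family, so the local balancing at $a$ by itself does not pin down the signs of $\tilde{\lambda}_u$; one really has to use that $\lambda$ is the \emph{unique} globally balanced stress. The plan is to adapt the sign-change argument from the standard proofs of Dehn's lemma (cf.\ \cite[Theorem~6.17]{Lee-notes} and \cite[Ch.~26.3 \& 32.3]{Pak-book}): assume for contradiction that some $\tilde{\lambda}_{u_0} < 0$, read off the cyclic sign pattern of $(\tilde{\lambda}_u)$ along $\partial Q$, and use the balancing conditions at the neighbors of $a$ to propagate sign information across the triangulated $2$-sphere $\partial P$. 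A sign-change count on this triangulated sphere (together with $f_0-f_1+f_2 = 2$) then yields a combinatorial contradiction, forcing $\tilde{\lambda}_u \geq 0$ for every $u\sim a$. The careful combinatorial bookkeeping in this propagation step—treating $a$ (and symmetrically $b$) as a ``special'' vertex where the extra edge $ab$ has positive stress and $b'$ sits inside the vertex figure—is where the proof differs from, and is the main obstacle beyond, classical Dehn.
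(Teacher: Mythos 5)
Your existence-and-uniqueness argument is correct and matches the paper's: Dehn's lemma plus $f_1 = 3f_0 - 6$ give $\dim\Stress_2(G(P),p)=0$, Corollary~\ref{cor: add missing edge} produces a stress with $\lambda_{ab}\neq 0$, and $\dim\Stress_2(G(P)\cup ab,p)=1$, so normalizing $\lambda_{ab}=1$ pins $\lambda$ down.

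For the sign condition you name the right tool --- the Dehn-type sign-change count --- but your sketch stops precisely where the actual work happens, and that gap is not cosmetic: the lemma's content is the count you defer. Moreover, the ``main obstacle'' you flag (special bookkeeping for $a$ and $b$) is in fact quite mild, because the adaptation is only to \emph{omit} $a,b$ from the lower bound; the classical $m_v\geq 4$ bound at every other non-all-zero vertex $v$ (Pak, Lemma~32.3) applies verbatim since the balancing at $v\neq a,b$ involves only edges of $G(P)$. Writing $m_v$ for the cyclic sign changes around $v$ and $n_F\leq 2$ for those around a triangle, one has $\sum_v m_v=\sum_F n_F$, and (in the generic case where no vertex is all-zero)
\[
4(f_0-2)\;\leq\;\sum_{v\neq a,b} m_v\;\leq\;\sum_v m_v\;=\;\sum_F n_F\;\leq\;2f_2\;=\;2(2f_0-4)\;=\;4(f_0-2),
\]
so equality holds throughout and $m_a=m_b=0$. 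Then $m_a=0$ means all nonzero $\lambda_{au}$ share a sign, and since $\lambda_{ab}=1>0$ together with the balancing at $a$ rules out a nonnegative combination of the outward edge vectors summing to zero, they must all be $\leq 0$; symmetrically at $b$. (The all-zero vertices are handled by deleting them and re-triangulating with zero-labeled edges.) Your vertex-figure projection is a harmless reformulation of this last step but adds nothing; what is missing from your write-up is the Euler count above, which is the actual proof.
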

\begin{proof}
Since simplicial 3-polytopes are infinitesimally rigid and do not support nontrivial affine $2$-stresses, it follows from Corollary \ref{cor: add missing edge} that there exists a $2$-stress $\lambda$ supported on $G(P)\cup ab$ with $\lambda_{ab}\neq 0$ and it is unique up to scalar multiplication. So we can assume that $\lambda_{ab}=1$. We label each edge $e$ of $G(P)$ with $+, -, 0$ according to $\sign(\lambda_e)$. 

We follow the notation of \cite[p.~251]{Pak-book}  and give a sketch of the proof below.
Given a vertex $v\in V(P)$, consider the labels of the edges of $P$ containing $v$ written in the cyclic order induced by $\lk_{\partial P}(v)$ and ignoring the zero labels. Denote by $m_v$ the number of sign changes at $v$. Let $N=\sum_{v\in V} m_v$. By \cite[Lemma 32.3]{Pak-book}, for $v\neq a,b$, $m_v \geq 4$ unless all labels around $v$ are zeros. On the other hand, $N$ can also be computed as $N=\sum_{F} n_F$ where the sum is over $2$-faces (i.e., triangles) and $n_F$ is the number of sign changes around $F$; in particular, $n_F\leq 2$ for every $2$-face~$F$. 

The lemma will follow if we prove that $m_a=m_b=0$ (as the balancing condition at $a$ would then imply that $\lambda_e\leq 0$ for all $e\in G(P)$ with $a\in e$, and similarly for $b$). There are two cases to consider.  If no vertex has  {\em all} edges incident to it labeled $0$, then $$ 4(f_0-2) \leq \sum_{v\in V} m_v \leq N \leq 2f_2=2(2f_0-4), $$ which forces $m_a=m_b=0$. Otherwise, some vertices have all edges incident to them labeled $0$. Consider the graph $G'$ obtained from $G(P)$ by first removing all such vertices, and then adding edges to triangulate all resulting non-triangular $2$-faces and labeling the new edges $0$. The computation as above applies to $G'$ and implies that $m_a=m_b=0$ in $G'$. Hence $m_a=m_b=0$ also in $G(P)$.
\end{proof}

Our next goal is to prove Conjecture \ref{conj3} in the case of $k=2$ and $d>3$. This will require a bit of preparation. The proof idea in this case is based on Balinski's theorem \cite{Balinski}, see also \cite[Section 3.5]{Ziegler}, asserting that every $d$-polytope $P$ is {\em $d$-connected}.
We need a couple of extensions of Balinski's theorem. We only sketch the proofs as they are easy consequences of the proof of the theorem.
\begin{lemma}\label{lm: Balinski}
	Let $P$ be a $d$-polytope (not necessarily simplicial) and let $W$ be a set of vertices of $P$ whose removal disconnects $G(P)$. Then $\Aff(W)$ is at least $(d-1)$-dimensional.
\end{lemma}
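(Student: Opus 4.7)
The plan is to adapt the standard proof of Balinski's theorem, tracking the affine dimension of the separator rather than merely its cardinality. Assume for contradiction that $\dim\Aff(W)\leq d-2$; a direct count then gives that the space $\mathcal{A}$ of affine functionals on $\R^d$ vanishing on $\Aff(W)$ has dimension $d-\dim\Aff(W)\geq 2$. Throughout I will use the classical connectivity fact underlying Balinski's proof: for any affine functional $\varphi$ and any $c\in\R$, the set $\{v\in V(P):\varphi(v)>c\}$ induces a connected subgraph of $G(P)$. Applied with $c=0$ and $\varphi\in\mathcal{A}$, this subgraph is disjoint from $W$ since $\varphi|_W\equiv 0$.

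Given $a,b\in V(P)\setminus W$, I will construct a path from $a$ to $b$ in $G(P)\setminus W$. First reduce to the case $a,b\notin\Aff(W)$: if $a\in\Aff(W)\setminus W$, replace $a$ by a neighbor $a'\notin\Aff(W)$. Such a neighbor exists because the edge directions at any vertex of a $d$-polytope span $\R^d$ and hence cannot all point into a flat of dimension $\leq d-2$; since $W\subseteq\Aff(W)$, we have $a'\notin W$, so $aa'\in G(P)\setminus W$. Replace $b$ similarly if needed. Now assume $a,b\notin\Aff(W)$ and consider the evaluation $\mathrm{ev}\colon\mathcal{A}\to\R^2$, $\varphi\mapsto(\varphi(a),\varphi(b))$. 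The image is a linear subspace of $\R^2$. If $\mathrm{ev}$ has rank $2$, or rank $1$ with image a line of positive slope, pick $\varphi\in\mathcal{A}$ with $\varphi(a),\varphi(b)>0$; the connectivity fact then produces a path $a\rightsquigarrow b$ through $\{v:\varphi(v)>0\}\subseteq V(P)\setminus W$. (Horizontal or vertical rank-$1$ images would force $b$ or $a$ into $\Aff(W)$, contradicting our reduction.) The delicate case is rank $1$ with negative slope. Here fix linearly independent $\varphi_1,\varphi_2\in\mathcal{A}$ with $\varphi_1(a)\neq 0$ and $\varphi_2(b)\neq 0$ (possible since $a,b\notin\Aff(W)$), and let $\Pi:=(\varphi_1,\varphi_2)\colon\R^d\to\R^2$. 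Then $\Pi(a)$ and $\Pi(b)$ are both nonzero and lie on a common line $\ell\subset\R^2$ through the origin, on opposite sides of it. Find a vertex $c\in V(P)$ with $\Pi(c)\notin\ell$: since $\Pi$ has linearly independent linear parts, it is surjective, so $\Pi^{-1}(\ell)$ is a hyperplane in $\R^d$; if every vertex of $P$ lay in it, then $\Aff(V(P))\subsetneq\R^d$, contradicting the full-dimensionality of $P$. Necessarily $c\notin\Aff(W)$, and both pairs $(a,c)$ and $(c,b)$ now have rank-$2$ evaluation, since $\Pi(a),\Pi(c)$ and $\Pi(c),\Pi(b)$ are linearly independent pairs in $\R^2$. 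The previous case yields paths $a\rightsquigarrow c$ and $c\rightsquigarrow b$ in $G(P)\setminus W$, whose concatenation is the desired path.

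The main obstacle is precisely the rank-$1$ negative-slope case, in which no single $\varphi\in\mathcal{A}$ keeps both $a$ and $b$ strictly above $\{\varphi=0\}$. The key leverage of the strengthened hypothesis $\dim\Aff(W)\leq d-2$ (as opposed to Balinski's $|W|\leq d-1$) is exactly that it gives $\dim\mathcal{A}\geq 2$, supplying the two-parameter family needed to introduce the intermediate vertex $c$ off the offending line $\ell$.
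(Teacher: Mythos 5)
Your argument is correct and establishes the lemma, but it packages the adaptation of Balinski's proof differently from the paper. The paper's proof is more economical: it chooses a single vertex $v_0\in V(P)\setminus W$ and a hyperplane $H$ containing $\Aff(W\cup\{v_0\})$ (possible precisely because $\dim\Aff(W)\leq d-2$), then runs the usual two-sided sweep with the linear functional vanishing on $H$. The max- and min-faces lie in $G(P)\setminus W$, every vertex off $W$ reaches one of them by a monotone path, and $v_0$ (lying on $H$) reaches both — so $G(P)\setminus W$ is connected in one stroke, with no case analysis. Your proof instead works pair by pair: it fixes $a,b\notin W$, reduces to $a,b\notin\Aff(W)$ via the edge-direction-spanning observation, and then inspects the evaluation map $\mathcal{A}\to\R^2$. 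The pleasant cases (rank $2$, or rank $1$ with positive slope) are handled by a single sublevel sweep, and the one problematic case (rank $1$, negative slope) is resolved by finding an auxiliary vertex $c$ off $\Pi^{-1}(\ell)$ and concatenating two paths $a\rightsquigarrow c$ and $c\rightsquigarrow b$. Your $c$ plays the same structural role as the paper's anchor $v_0$, but is located via surjectivity of $\Pi$ rather than by baking it into the hyperplane from the start. Both are legitimate; the paper's version avoids the rank bookkeeping and the reduction step, while yours foregrounds exactly how the slack $\dim\mathcal{A}\geq2$ is spent.
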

\begin{proof}
Assume $\dim \Aff(W)\leq d-2$. Choose any $v_0\in V(P)\backslash W$ and a hyperplane $H$ in $\R^d$  that contains $\Aff(W\cup v_0)$. Let $\phi: \R^d \to \R$ be a non-zero linear function that is zero on $H$. If the maximum value $\phi_{\max}$ of $\phi$ on $P$ is positive, it is attained on the vertices of a face $F_{\max}$ of $P$.
In this case, the graph $G_{\max}:=G(F_{\max})$ is a connected graph contained in $G(P)\backslash W$. Similar assertions hold for $G_{\min}:=G(F_{\min})$ (assuming $\phi_{\min}$ is negative).

Now, for each vertex $v\in V(P)\backslash W$ with $\phi(v)\geq 0$ (including $v_0$), the simplex algorithm gives a $\phi$-increasing path connecting $v$ to some vertex of $G_{\max}$. Such a path lies in $G(P)\backslash W$. Similarly, each $v\in V(P)\backslash W$ with $\phi(v)\leq 0$ (including $v_0$) is connected to some vertex of $G_{\min}$ by a $\phi$-decreasing path that lies in $G(P)\backslash W$. It follows that $G(P)\backslash W$ is connected, which is a contradiction. (This proof is easily adjusted to the case where $\phi_{\max}$ or $\phi_{\min}$ is zero.)
\end{proof}

\begin{lemma}\label{lm: intersecting convex hulls}
	Let $P$ be a simplicial $d$-polytope and let $ab$ be a missing edge of $\partial P$. Let $C$ be the collection of vertices $c$ in $\lk_{\partial P}(a)$ with the property that there is a path $\Pi_c$ from $c$ to $b$ in $G(P)$ such that no internal vertex of $\Pi_c$ is in $\lk_{\partial P}(a)$. Then $\conv(C)\cap [a,b]\neq \emptyset$.
\end{lemma}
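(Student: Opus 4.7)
The plan is to argue by contradiction, combining strict hyperplane separation with the simplex-method idea used in Lemma~\ref{lm: Balinski}. Before starting, I would note that $C\ne\emptyset$: since $G(P)$ is connected, any shortest path from $b$ to $a$ must pass through $\lk_{\partial P}(a)$ at the vertex immediately preceding $a$, and the corresponding initial segment exhibits this vertex as an element of $C$.

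Suppose $\conv(C)\cap[a,b]=\emptyset$. Since both are disjoint compact convex subsets of $\R^d$, there is a hyperplane $H=\{x:x\cdot\nu=\alpha\}$ strictly separating them; after possibly replacing $(\nu,\alpha)$ by $(-\nu,-\alpha)$, I may assume $a\cdot\nu>\alpha$, $b\cdot\nu>\alpha$, while $c\cdot\nu<\alpha$ for every $c\in C$. Set
$$V_+:=\{v\in V(P):v\cdot\nu>\alpha\},$$
so that $a,b\in V_+$ and $V_+\cap C=\emptyset$.

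Next, I would replay the simplex-method step from the proof of Lemma~\ref{lm: Balinski}: for every vertex $v\in V_+$ there is a $\nu$-non-decreasing edge path in $G(P)$ from $v$ to some vertex of the face $F_{\max}$ on which $\nu$ attains its maximum on $P$. Along such a path $\nu$ stays $\ge \nu(v)>\alpha$, so the path lies in $V_+$. Since $\nu(a)>\alpha$ gives $\nu_{\max}>\alpha$, the face $F_{\max}$ itself lies in $V_+$, and concatenation through $F_{\max}$ shows $G(P)[V_+]$ is connected. Fix a path $a=v_0,v_1,\dots,v_k=b$ in $G(P)$ with every $v_i\in V_+$.

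Finally, I would extract the contradiction by examining this path. Let $i^{\ast}$ be the largest index with $v_{i^{\ast}}\in\lk_{\partial P}(a)$. As $v_1$ is a neighbor of $a$, $v_1\in\lk_{\partial P}(a)$, so $i^{\ast}\ge 1$; and as $ab$ is a missing edge, $b\notin\lk_{\partial P}(a)$, so $i^{\ast}\le k-1$. By the choice of $i^{\ast}$, no internal vertex of the subpath $v_{i^{\ast}},v_{i^{\ast}+1},\dots,v_k=b$ lies in $\lk_{\partial P}(a)$, and no vertex of a simple path equals the earlier vertex $a$. This subpath therefore witnesses $v_{i^{\ast}}\in C$, contradicting $v_{i^{\ast}}\in V_+$ and $V_+\cap C=\emptyset$.

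The argument is conceptually short once the hyperplane $H$ is in place; the only delicate point, which I would verify explicitly, is the index bookkeeping showing $1\le i^{\ast}\le k-1$ and that the subpath from $v_{i^{\ast}}$ to $b$ really meets the definition of $C$. No infinitesimal-rigidity input is needed here; the proof is purely a combinatorial-geometric consequence of convexity and the simplex-method argument underlying Balinski's theorem.
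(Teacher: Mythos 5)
Your proof is correct and follows essentially the same route as the paper's: strictly separate $\conv(C)$ from $[a,b]$, use the simplex-method idea (as in Lemma~\ref{lm: Balinski}) to get a path from $a$ to $b$ lying entirely on the $[a,b]$-side of the hyperplane, then take the last vertex of that path lying in $\lk_{\partial P}(a)$ and observe that it must belong to $C$, contradicting the separation. The only cosmetic differences are that you add a short justification that $C\neq\emptyset$ (the paper leaves this implicit) and parametrize the hyperplane explicitly instead of choosing one missing all vertices.
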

\begin{proof}
	 Assume that $\conv(C)\cap [a,b]= \emptyset$, and let $H$ be a hyperplane that separates $\conv(C)$ from $[a,b]$ and does not contain any vertices of $G(P)$. W.l.o.g., $[a,b]\subseteq \intr(H^+)$ and $\conv(C)\subseteq \intr(H^-)$. Then $H$ defines a linear function $\phi$ that is zero on $H$ and positive on $\intr(H^+)$. As in the proof of Lemma \ref{lm: Balinski}, each vertex $v$ in $\intr(H^+)$ can be connected by a $\phi$-increasing path to $G_{\max}$, which implies that the restriction $G'$ of $G(P)$ to the vertices in $\intr(H^+)$ is a connected subgraph of $G(P)$. In particular, $G'$ contains a path $(a=a_0, a_1, \dots, a_\ell=b)$, where $\ell\geq 2$. Let $i_0$ be the largest index such that $a_{i_0}\in \lk(a)$. It exists since $a_1\in \lk(a_0)=\lk(a)$. Thus $\Pi_{a_{i_0}}:=(a_{i_0}, a_{i_0+1}, \dots, a_\ell=b)$ is a path as in the statement of the lemma and hence $a_{i_0}\in C$. But $\conv(C)\subseteq \intr(H^-)$, which is impossible because this entire path is contained in $\intr(H^+)$. This gives us a desired contradiction.
\end{proof}

We are now ready to prove Conjecture \ref{conj3} in the case of $k=2$ and $d>3$. Specifically, we prove

\begin{lemma} \label{lm:k=2,d>3}
Let $d\geq 4$, let $P$ be a simplicial $d$-polytope, and let $ab$ be a missing edge of $\partial P$. Then there is a 2-stress $\lambda$ on $G(P)\cup ab$ such that $\lambda_{ab}=1$ and $\lambda_e\leq 0$ for every edge $e\in G(P)$ that is incident to $a$.
\end{lemma}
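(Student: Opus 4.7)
The plan is to argue by contradiction using LP duality, reducing the question to an infinitesimal-rigidity statement and then obtaining a contradiction from Lemma~\ref{lm: intersecting convex hulls}. Suppose no such 2-stress exists. By Farkas' lemma applied to the system $\{\lambda\in\Stress_2(G(P)\cup ab):\lambda_{ab}=1,\ \lambda_{ac}\le 0\ \forall c\in\lk(a)\}$, there is a ``velocity'' $y=(y_v)_{v\in V(P)}\in(\R^d)^{V(P)}$ satisfying $\langle y_v-y_w,p(v)-p(w)\rangle=0$ for every edge $vw$ of $G(\cost(a)):=G(P)-a$, and $\langle y_a-y_c,p(c)-p(a)\rangle\le 0$ for every $c\in\lk(a)$, and $\langle y_a-y_b,p(b)-p(a)\rangle>0$. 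Subtracting the infinitesimal translation by $y_a$ (which preserves all three conditions), I may assume $y_a=\emb{0}$, so that $\langle y_c,p(c)-p(a)\rangle\ge 0$ for $c\in\lk(a)$ and $\langle y_b,p(b)-p(a)\rangle<0$. In particular $y|_{V(P)\setminus\{a\}}$ is an infinitesimal motion of the framework $(G(\cost(a)),p)$ in $\R^d$.

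The key technical step is to show that, for $d\ge 4$, the framework $(G(\cost(a)),p)$ is itself infinitesimally rigid in $\R^d$. My plan is an iterated gluing argument: by Lemma~\ref{cone-lemma2}, $(G(\st_{\partial P}(v)),p)$ is rigid in $\R^d$ for every vertex $v$, and whenever $v\notin\st_{\partial P}(a)$ one has $\st_{\partial P}(v)\subseteq\cost(a)$; two such stars that share a facet of $\partial P$ overlap in $d$ affinely independent vertices, so the Gluing Lemma~\ref{gluing-lemma} assembles them into a rigid subframework covering $V(P)\setminus\st_{\partial P}(a)$. The remaining vertices of $\lk(a)$ are then absorbed by gluing with the rigid stars of common neighbors; the delicate sub-case of a vertex $u\in\lk(a)$ whose entire neighborhood lies in $\{a\}\cup\lk(a)$ is handled by invoking the rigidity of the vertex figure $\partial Q$ in $\R^{d-1}$ (Theorem~\ref{Whiteley-thm} applied to the $(d-1)$-polytope $Q$) and lifting through the Cone Lemma.

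Granted this rigidity, $y_v=Ap(v)+t$ for all $v\ne a$, with $A$ skew-symmetric and $t\in\R^d$. Using $\langle Ax,x\rangle=0$ for every $x\in\R^d$, a direct computation gives
\[
\langle y_v,\,p(v)-p(a)\rangle=\langle u,\,p(v)-p(a)\rangle \quad\text{for every }v\ne a,
\]
where $u:=Ap(a)+t$. Hence $\langle u,p(c)-p(a)\rangle\ge 0$ for every $c\in\lk(a)$ while $\langle u,p(b)-p(a)\rangle<0$.

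To finish, apply Lemma~\ref{lm: intersecting convex hulls} to obtain $C\subseteq\lk(a)$ with $\conv(C)\cap[a,b]\ne\emptyset$. Since neither $p(a)$ nor $p(b)$ lies in $\conv(C)$ (they are vertices of $P$ and $C\subseteq V(\partial P)\setminus\{a,b\}$), the intersection point lies in the relative interior of $[a,b]$; writing it simultaneously as $\sum_{c\in C}\mu_c p(c)=(1-t)p(a)+tp(b)$ with $\mu_c\ge 0$, $\sum_c\mu_c=1$, and $t\in(0,1)$, one gets $p(b)-p(a)=\sum_{c\in C}\nu_c(p(c)-p(a))$ with $\nu_c=\mu_c/t\ge 0$. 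Pairing with $u$,
\[
\langle u,\,p(b)-p(a)\rangle=\sum_{c\in C}\nu_c\,\langle u,\,p(c)-p(a)\rangle\;\ge\;0,
\]
contradicting $\langle u,p(b)-p(a)\rangle<0$. The main obstacle I anticipate is the rigidity of $(G(\cost(a)),p)$ in Step~2: the assertion is natural and matches rigidity counts, but the combinatorial gluing, and in particular the treatment of vertices $u\in\lk(a)$ whose neighborhoods are entirely contained in $\st_{\partial P}(a)$, is the most delicate ingredient.
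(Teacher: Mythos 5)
Your LP-duality plan is a genuinely different route from the paper's, but the argument hinges on a claim that is false: it is \textbf{not} true that $(G(P)\setminus a,\,p)$ is infinitesimally rigid in $\R^d$ for every vertex $a$ of a simplicial $d$-polytope with $d\geq 4$, and your proposed iterated-gluing proof of it cannot be made to work. A simple dimension count already shows the claim must fail whenever $\deg(a)>d+g_2(P)$, since then $f_1(G(P)\setminus a)-d\cdot f_0(G(P)\setminus a)+\binom{d+1}{2}=g_2(P)-(\deg(a)-d)<0$. Concretely, start from the $4$-simplex on $\{1,\dots,5\}$, stack on the facet $\{1,2,3,4\}$ to create vertex $6$, then stack on $\{1,2,3,6\}$ to create vertex $7$. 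Then $\{4,7\}$ is a missing edge of $\partial P$, $\lk(4)=\{1,2,3,5,6\}$, and in $G(P)\setminus 4$ the vertex $5$ has only three neighbors, namely $1,2,3$; a vertex of degree $3<d=4$ forces a nontrivial infinitesimal flex. This is exactly the ``delicate sub-case'' you flag: every neighbor of $5$ lies in $\{4\}\cup\lk(4)$, so every vertex star $\st(w)$ that contains $5$ also contains $4$ and hence is not a subgraph of $G(P)\setminus a$. The fallback you suggest — rigidity of the vertex figure $\partial Q$ in $\R^{d-1}$ lifted through the Cone Lemma — does not help either: Lemma~\ref{cone-lemma1} identifies $\Stress_k(\lk(a),q)$ with $\Stress_k(a*\lk(a),p)=\Stress_k(\st(a),p)$, i.e.\ it gives rigidity of the \emph{star} of $a$ in $\R^d$, which again contains $a$. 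It gives no control on $(G(\lk(a)),p)$ as a $d$-framework, and indeed in the example $(G(\lk(4)),p)$ has $5$ vertices and $9$ edges, short of the $4\cdot 5-\binom{5}{2}=10$ needed for rigidity in $\R^4$. Once $(G(P)\setminus a,p)$ is flexible, your velocity $y$ produced by Farkas need not be a trivial motion, the identity $y_v=Ap(v)+t$ collapses, and the final contradiction never materializes.

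The paper sidesteps this entirely. Rather than asking the whole vertex-deleted graph to be rigid, it uses Lemmas~\ref{lm: Balinski} and~\ref{lm: intersecting convex hulls} together with Carath\'eodory to extract a $d$-element affinely independent subset $C'\subseteq\lk(a)$ with $\conv(C')\cap[a,b]\neq\emptyset$, glues the vertex stars $\st(v)$ only along chosen paths $\Pi'_c$ from $C'$ to $b$ (these stars avoid $a$ by construction) to obtain a rigid subframework $K$, and then re-attaches $a$ to $K$ by exactly the $d$ edges $\{ac:c\in C'\}$. The resulting $G'\subseteq G(P)$ is rigid by the Gluing Lemma, $ab$ is still missing from it, and — because $a$ now has precisely $d+1$ neighbors $b,c_1,\dots,c_d$ with $p(C')$ affinely independent and $\conv(C')$ meeting $[a,b]$ — the balancing condition at $a$ forces the correct sign pattern for the stress produced by Corollary~\ref{cor: add missing edge}. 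That is the extra idea your proposal is missing: one has to manufacture a rigid subgraph in which the $a$-incidences are few and geometrically controlled, not invoke rigidity of all of $G(P)\setminus a$.
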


\begin{proof}
	Let $C\subseteq V(\lk_{\partial P}(a))$ be as in the statement of Lemma \ref{lm: intersecting convex hulls}. By definition of $C$, $C$ separates $a$ from $b$. Hence by Lemmas \ref{lm: Balinski} and \ref{lm: intersecting convex hulls}, $\dim\Aff(C)\geq d-1$ and $\conv(C)\cap [a,b]\neq \emptyset$. Either $\conv(C)$ is $(d-1)$-dimensional, or since the vertices $p(a), p(b)$ lie outside of $\conv(C)$, $\conv(C)\cap [a,b]$ must contain a boundary point of $\conv(C)$. In either case, by Carath\'eodory's theorem there exists a $d$-subset $C'$ of $C$ such that the points of $p(C')$ are affinely independent and $\conv(C')\cap [a,b]\neq \emptyset$.

    For each $c\in C'$, consider a path $\Pi_c$ from $c$ to $b$ as in Lemma \ref{lm: intersecting convex hulls}, and let $\Pi'_c$ be the path obtained from $\Pi_c$ by deleting the initial vertex $c$. Define $$K:=\bigcup_{c\in C'}\bigcup_{v\in V(\Pi'_c)} \st_{\partial P}(v).$$ Note that all paths $\Pi'_c$ share a common end-point $b$. Hence the union of paths $\Pi'_c$ over $c\in C'$ is a connected subgraph of $G(P)$. Since $d\geq 4$, all vertex stars of $P$ are infinitesimally rigid in $\R^d$. Hence by the Gluing Lemma, $(K,p)$ is infinitesimally rigid. Furthermore, $a\notin V(K)$ but $C'\subseteq V(K)$. (This is because, each $c\in C'$ is in the star of its neighbor on the path $\Pi_c$.)
	
	Add to the graph of $K$ the $d$ edges $\{ac: c\in C'\}$ to obtain a new graph $G' \subseteq G(P)$. Since the $d$ vertices from $C'$ are affinely independent and $p(a)\notin \Aff(p(C'))$, it follows that the graph $(G',p)$ is also infinitesimally rigid. Note that $ab$ is a missing edge of $G'$, and so by Corollary \ref{cor: add missing edge}, the graph $(G'\cup ab, p)$ supports a $2$-stress $\lambda$ with $\lambda_{ab}=1$. Now, in $G'\cup ab$, the vertex $a$ is incident to exactly $d+1$ edges, namely, $ab$ as well as $ac$ for $c\in C'$. Since $p(C') \subset \R^d$ is affinely independent and has size $d$, there is a unique way to write  $$\lambda_{ab}\pi_{a, ab}=p(b)-p(a)=\sum_{c\in C'} k_c (p(c)-p(a))=\sum_{c\in C'} k_c\pi_{a, ac}.$$ As $[a,b]\cap \conv(C')\neq \emptyset$, all coefficients $k_c$ in this expression are nonnegative. Hence in the balancing condition at $a$, $\lambda_{ac}=-k_c\leq 0$ for all $c\in C'$. Thus $\lambda$ is a desired stress on $G'\cup ab \subseteq G(P) \cup ab$. 
\end{proof}

We conclude from Lemmas \ref{lm: identify missing 2-faces} and \ref{lm:k=2,d>3} the following:
\begin{theorem} \label{thm:k=2}
Conjecture \ref{conj3} holds in the case of $k=2$. %and all $d\geq 3$.
\end{theorem}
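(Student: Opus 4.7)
The plan is to directly combine the two dimension-specific lemmas established in this subsection, since together they already cover every case of Conjecture \ref{conj3} with $k=2$. Unpacking the statement of the conjecture at $k=2$: given a simplicial $d$-polytope $P$ with $d\geq 2k-1=3$, a missing 1-face $G=\{a,b\}$ of $\partial P$, and a 1-subset $F\subseteq G$ (by symmetry say $F=\{a\}$), we must exhibit an affine 2-stress $\lambda$ on $\partial P\cup\{ab\}$ with $\lambda_{ab}>0$ and $\lambda_{\{a,u\}}\leq 0$ for every edge $\{a,u\}\in\partial P$.

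First I would split by dimension. When $d=3$, Lemma \ref{lm: identify missing 2-faces} constructs a 2-stress on $G(P)\cup ab$ with $\lambda_{ab}=1>0$ and $\lambda_e\leq 0$ for every edge $e$ incident to $a$ (in fact also for every edge incident to $b$, which is stronger than needed). When $d\geq 4$, Lemma \ref{lm:k=2,d>3} produces a 2-stress on $G(P)\cup ab$ with exactly the required properties: $\lambda_{ab}=1>0$ and $\lambda_e\leq 0$ for every edge $e\in G(P)$ incident to $a$. In either case, extending the stress by zero on the remaining 1-faces of $\partial P$ gives an affine 2-stress on $\partial P\cup\{ab\}$ witnessing the conjecture.

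The substantive work is therefore not in the assembly but in the two lemmas themselves, and the potential obstacle is that one might worry the $d\geq 4$ construction fails to give exactly the asymmetric conclusion (non-positivity at $a$ only, not necessarily at $b$) demanded by Conjecture \ref{conj3}; however, inspection shows that Lemma \ref{lm:k=2,d>3} is already phrased in this one-sided form, built around the chosen endpoint $a$ via the separating set $C\subseteq V(\lk_{\partial P}(a))$ and the paths from $C$ to the opposite endpoint $b$. Thus no additional argument is needed beyond noting that the hypothesis $d\geq 2k-1=3$ of the conjecture matches the combined hypotheses of the two lemmas, and that both lemmas furnish the 2-stress on precisely the complex $\partial P\cup\{ab\}$ required.
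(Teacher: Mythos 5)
Your proof is correct and follows the same route the paper takes: the theorem is simply the conjunction of Lemma \ref{lm: identify missing 2-faces} (the $d=3$ case) and Lemma \ref{lm:k=2,d>3} (the $d\geq 4$ case), with the symmetry in $a$ and $b$ handling the choice of $F$. The only cosmetic remark is that no ``extension by zero'' is actually needed: since a $2$-stress involves only monomials of degree $2$, a $2$-stress on $G(P)\cup ab=\skel_1(\partial P\cup\{ab\})$ is already a $2$-stress on $\partial P\cup\{ab\}$.
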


\subsection{$k$-neighborly polytopes}
We are now in a position to prove Conjecture \ref{conj2} for all $k$-neighborly polytopes; thus verifying Conjecture \ref{conj:Kalai} for this family. In fact, the following theorem, whose proof is almost identical to that of our toy example in Section 4,  establishes a somewhat stronger result.  

\begin{theorem}  \label{thm:k-neighb}
	Let $k\geq 2$ and $d\geq 2k$. Let $P$ be a $k$-neighborly simplicial $d$-polytope. If $M$ is a missing face of $\partial P$, then there is an affine $k$-stress $\lambda=\sum_{\mu}\lambda_\mu \mu$ such that for every $k$-subset $G$ of $V=V(\partial P)$,   
	 $$ \lambda_G>0 \mbox{ if } G\subset M \quad \mbox{and} \quad (-1)^{|(V\backslash M)\cap G|}\lambda_G \geq 0 \mbox{ if }G\not\subset M.$$
\end{theorem}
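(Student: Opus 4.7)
My plan is to follow the strategy of the toy example from Section~4: first produce an affine $1$-stress $\mu$ on $P$ whose coefficients are strictly positive on $M$ and nonpositive on $V\setminus M$, and then set $\lambda:=\mu^k$ and verify the sign conditions term by term.

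The first step is to exhibit real numbers $(\mu_v)_{v\in V}$ satisfying $\sum_v\mu_v=0$ and $\sum_v\mu_v\,p(v)=\emb{0}$, with $\mu_v>0$ for $v\in M$ and $\mu_v\leq 0$ for $v\in V\setminus M$. Equivalently, I need to show that $\relint(\conv(M))\cap\conv(V\setminus M)\neq\emptyset$. The starting observation is that the missing face property of $M$ forces $\relint(\conv(M))$ into $\intr(P)$: every proper subset of $M$ is a face of $\partial P$, so the relative boundary of $\conv(M)$ lies in $\partial P$, and if some $x\in\relint(\conv(M))$ lay on $\partial P$ then the unique face of $P$ whose relative interior contains $x$ would have to contain all of $\conv(M)$, forcing $M\in\partial P$ and contradicting the missing face property. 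From there, a Farkas/separation argument (a Radon-type partition argument generalizing the one in the toy example) should show that some point $x_0\in\relint(\conv(M))$ also lies in $\conv(V\setminus M)$, and the corresponding difference of convex combinations gives the required coefficients $(\mu_v)$. Setting $\mu:=\sum_v\mu_v\,x_v$ then produces the desired $1$-stress.

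Next, I let $\lambda:=\mu^k$ and check that $\lambda\in\Stress_k(P)$. The stress equations follow from the product rule: $\partial_{\theta_i}(\mu^k)=k\,\mu^{k-1}\,\partial_{\theta_i}(\mu)=0$ for every $i$, since $\mu$ is a $1$-stress. The support condition is immediate from $k$-neighborliness, because every monomial of $\mu^k$ has support of size at most $k$ and, by hypothesis, every such subset of $V$ is already a face of $\partial P$.

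Finally, I read off the signs. The squarefree coefficient of $\lambda$ at a $k$-subset $G$ of $V$ is $\lambda_G=k!\prod_{v\in G}\mu_v$. If $G\subset M$, all factors are strictly positive and so $\lambda_G>0$. Otherwise I split the product as $\lambda_G=k!\bigl(\prod_{v\in G\cap M}\mu_v\bigr)\bigl(\prod_{v\in G\cap(V\setminus M)}\mu_v\bigr)$; the first factor is positive while each factor of the second is $\leq 0$, so $(-1)^{|G\cap(V\setminus M)|}\lambda_G=k!\bigl(\prod_{v\in G\cap M}\mu_v\bigr)\prod_{v\in G\cap(V\setminus M)}|\mu_v|\geq 0$, as required. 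The main obstacle will be the geometric claim used in the first step: in the toy example, $V\setminus M$ was itself a complementary missing face and the Radon partition supplied the needed intersection automatically, whereas in the general $k$-neighborly setting one has to argue directly that the missing face property of $M$ (combined, if needed, with $k$-neighborliness to supply enough edges between $M$ and $V\setminus M$) forces $\conv(V\setminus M)$ to reach into $\relint(\conv(M))$.
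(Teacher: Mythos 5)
Your proposal is essentially the paper's own proof: take a point of $\relint(\conv(M))\cap\conv(V\setminus M)$ to build an affine $1$-stress $\phi$ that is strictly positive on $M$ and nonpositive on $V\setminus M$, set $\lambda=\phi^k$, use $k$-neighborliness for the support condition, and read off the signs from $\lambda_G=k!\prod_{v\in G}\phi_v$. The geometric intersection claim that you flag as the ``main obstacle'' is also stated without further justification in the paper's proof (only the toy example in Section~4, where $V\setminus M$ is a complementary missing face and Radon applies, is argued in detail); it is a standard fact about missing faces (provable, e.g., via Gale duality or the separation argument you sketch, using that $\relint(\conv(M))\subseteq\intr(P)$), and your instinct that this is the crux is correct. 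One small point worth making explicit in either write-up: one needs $V\setminus M\neq\emptyset$, i.e., $P$ is not a simplex, as otherwise $\Stress_k(P)=0$ and there is nothing to prove.
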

\begin{proof}
	 Since $M$ is a missing face of $\partial P$, the relative interior of $\conv(M)$ intersects with $\conv(V\backslash M)$. In other words, there exist nonnegative coefficients $a_v$ such that $$\sum_{v\in M}a_v p(v) = \sum_{v\in V\backslash M} a_vp(v), \quad \sum_{v\in M} a_v=\sum_{v\in V\backslash M} a_v=1, \quad \text{and } a_v>0 \text{ if }v\in M.$$ This implies that $\phi:=\sum_{v\in M}a_v x_v -\sum_{v\in V\backslash M} a_vx_v$ is an affine 1-stress on $P$, i,e., $\partial_{\theta_i} (\phi)=0$ for all $1\leq i\leq d+1$, where $\Theta(p)=(\theta_1, \dots, \theta_{d+1})$ is the set of linear forms associated with $p$.
	
	Let $\lambda:=\phi^k$. Since $P$ is $k$-neighborly, all monomials of $\lambda$ are supported on the faces of $\partial P$. Furthermore, since  $\partial_{\theta_i} \phi=0$ for $1\leq i\leq d+1$, we obtain that $\partial_{\theta_i} (\phi^k)=0$ for $1\leq i\leq d+1$. Hence $\lambda$ is an affine $k$-stress on $P$. Finally, for every $(k-1)$-face $G$ of $\partial P$,  $(-1)^{|(V\backslash M)\cap G|}\lambda_G\geq 0$, and this inequality is strict if $G\subset M$. Thus, $\lambda=\phi^k$ is a desired $k$-stress.
	\end{proof}
	
We close this section with the following remark. Kalai (private communication) speculated that for $k=2$, the following strengthening of Conjecture \ref{conj:Kalai} holds. A simplicial $d$-polytope $P$ is called {\em prime} if $\partial P$ has no missing $(d-1)$-faces.
 
\begin{conjecture} \label{conj-Kalai-2}
Let $d\geq 4$ and let $P$ be a prime simplicial $d$-polytope. Then the graph of $P$ and the space of affine $2$-stresses of $P$ uniquely determine $P$ up to affine equivalence.
\end{conjecture}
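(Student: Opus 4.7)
The plan is to combine Theorem \ref{thm:k=2} with a local-to-global rigidity argument. From $\Stress_2(P)$ one immediately reads off the sign-vector set $\V_2(P)$, and by Theorem \ref{thm:k=2} (together with the implications Conjecture \ref{conj3} $\Rightarrow$ Conjecture \ref{conj2} $\Rightarrow$ Conjecture \ref{conj1} established in Section 4), the pair $(G(P), \V_2(P))$ determines $\partial P$ as an abstract simplicial complex. It remains to show that any two $d$-embeddings $p, p'$ realizing $\partial P$ as a simplicial polytope with $\Stress_2(\partial P, p) = \Stress_2(\partial P, p')$ must differ by an affine transformation.

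Affine invariance of the stress subspace is immediate: under $p \mapsto Ap + b$, each row $\vec r_{v,i}$ of the rigidity matrix $R_2(\partial P, p)$ is replaced by $\sum_j A_{ij}\vec r_{v,j}$, so the row space of $R_2$ and hence its orthogonal complement $\Stress_2$ are preserved. For the converse, I would proceed as follows. For each vertex $v$, let $V_v \subseteq \R^{E_v}$ (where $E_v$ denotes the set of edges incident to $v$) be the $d$-dimensional subspace spanned by the rows of $R_2(\partial P, p)$ labelled by $v$; then $V_v(p) = V_v(p')$ is equivalent to the existence of $A_v \in GL_d(\R)$ with $p'(u) - p'(v) = A_v(p(u) - p(v))$ for every $u \sim v$. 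The crux is the intrinsic description $V_v = \Stress_2(P)^\perp \cap \R^{E_v}$; granted this, a standard propagation finishes: fix any $v_0$, set $A := A_{v_0}$ and $b := p'(v_0) - A\,p(v_0)$, so that $p' = Ap + b$ on $\st_{\partial P}(v_0)$; for each $u \in \lk_{\partial P}(v_0)$, the vectors $\{p(w) - p(u): w \sim u\}$ span $\R^d$ (as the vertex figure at $u$ is a full-dimensional polytope), so the compatibility equations $(A_u - A)(p(w) - p(u)) = 0$ force $A_u = A$; connectivity of $G(P)$ then spreads the identity $p' = Ap + b$ to every vertex.

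The main obstacle is establishing $V_v = \Stress_2(P)^\perp \cap \R^{E_v}$. A direct dimension count, using that trivial infinitesimal motions of $G(P)$ contribute zero to any combination of rows of $R_2$, shows that this equality holds if and only if the framework $(G(P) \setminus v, p|_{V(\partial P) \setminus \{v\}})$ is infinitesimally rigid in $\R^d$. This condition is genuinely sensitive to primeness: for a stacked polytope, removing a vertex of the stacked facet leaves too few edges for rigidity, which explains why non-prime polytopes are excluded from the conjecture (indeed, for a stacked simplicial $4$-polytope one can check that $\dim\Stress_2 = 0$, while the apex still has a free parameter). To establish rigidity of $G(P) \setminus v$ for prime $P$ with $d \geq 4$, I would observe that $A(v) := V(\partial P) \setminus \st_{\partial P}(v)$ is nonempty (since no vertex of a polytope lies in every facet), and that for each $w \in A(v)$ the star framework $(G(\st_{\partial P}(w)), p)$ is infinitesimally rigid by Lemma \ref{cone-lemma2} and contained in $G(P) \setminus v$. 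I would then iteratively glue these stars via the Gluing Lemma (Lemma \ref{gluing-lemma}) to produce a rigid spanning subframework of $G(P) \setminus v$ covering both $A(v)$ and $\lk_{\partial P}(v)$. The delicate part, and the part most dependent on primeness, is verifying that every $u \in \lk_{\partial P}(v)$ can be incorporated into this gluing: primeness should prevent a missing $(d-1)$-face from allowing $\lk_{\partial P}(v)$ to detach from $A(v)$, and should guarantee that each such $u$ has a neighbor in $A(v)$ and that the requisite $(d-1)$-dimensional overlaps exist at each step. Making this precise, likely via induction on $d$ or a finer combinatorial analysis of prime polytopes, is the hardest step of the plan.
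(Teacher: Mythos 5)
The statement you are attacking is, in the paper, a \emph{conjecture}, not a theorem: the paper does not prove Conjecture \ref{conj-Kalai-2} in general. It only establishes it for the subclass of $2$-neighborly polytopes, and it does so by a short algebraic argument that is completely different from yours: for $\omega\in\Stress_1(P)$ and any $v$ with $\omega_v\neq 0$, one has $\omega^2\in\Stress_2(P)$ (here $2$-neighborliness is used to guarantee that every monomial of $\omega^2$ is supported on a face) and $\partial_{x_v}(\omega^2)=2\omega_v\omega$, so $\Stress_1(P)$ is recovered as the span of $\{\partial_{x_v}\lambda : v\in V,\ \lambda\in\Stress_2(P)\}$, and $\Stress_1(P)$ determines $P$ up to affine equivalence by the basic property of Gale diagrams. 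Your proposal aims at the \emph{full} conjecture, via a genuinely different route: first use Theorem \ref{thm:k=2} and the Section~4 implications to recover $\partial P$ (this step is fine and does not need primeness), and then argue via a local-rigidity description $V_v=\Stress_2(P)^\perp\cap\R^{E_v}$ followed by propagation over the graph.

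The difficulty is that your argument is not a proof but a strategy with an explicitly unresolved gap. As you yourself note, the crucial identity $V_v=\Stress_2(P)^\perp\cap\R^{E_v}$ holds precisely when the deleted-vertex framework $(G(P)\setminus v,\,p|_{V\setminus\{v\}})$ is infinitesimally rigid in $\R^d$, and you do not establish that this rigidity holds for every vertex of a prime simplicial $d$-polytope with $d\geq 4$. This is not a routine verification: even the necessary Maxwell count $f_1-\deg(v)\geq d(f_0-1)-\binom{d+1}{2}$, i.e.\ $\deg(v)\leq g_2(P)+d$, is not an obvious consequence of primeness (for instance, primeness of $P$ does not preclude $\lk_{\partial P}(v)$ from being a stacked $(d-1)$-sphere, in which case $g_2(\lk(v))=0$ and the star of $v$ contributes nothing to $g_2(P)$ while $\deg(v)$ can be large). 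The proposed gluing-of-stars argument over $A(v)=V\setminus\st(v)$ is also incomplete: you would need $A(v)$ to be nonempty and suitably connected, the union of those stars to cover all edges of $G(P)\setminus v$, and the overlaps at each gluing step to contain $d$ affinely independent vertices; none of these is verified. So what you have is a plausible and interesting program toward an open conjecture, but not a proof, and it is in any case not the argument the paper uses for the partial ($2$-neighborly) case it does prove.
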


The methods of this section lead to a simple proof of this conjecture for the class of $2$-neighborly polytopes. Indeed, if $P$ is  $2$-neighborly and $\omega$ is an affine $1$-stress on $P$, then $\lambda:=\omega^2$ is an affine $2$-stress on $P$. Furthermore, for any vertex $v$ in the support of $\omega$, $\partial_{x_v}(\lambda)= 2\partial_{x_v}(\omega)\cdot \omega\in \Stress_1(P)$ is a non-zero multiple of $\omega$. We conclude that if $P$ is $2$-neighborly, then $\{\partial_{x_v}\lambda : v\in V(\partial P), \;\lambda\in\Stress_2(P)\} $ coincides with $\Stress_1(P)$, i.e., we can reconstruct $\Stress_1(P)$ from $\Stress_2(P)$. The result follows since $\Stress_1(P)$ is the space of affine dependencies of vertices of $P$, which determines $P$ up to affine equivalence.

\medskip
We hope that the tools developed and conjectures raised in this paper will lead to further progress in our understanding of affine stresses on polytopes and perhaps even to a complete resolution of Conjectures \ref{conj:Kalai}   and \ref{conj-Kalai-2}.

\section*{Acknowledgments}
We are grateful to Gil Kalai for bringing Conjecture \ref{conj:Kalai} to our attention and for his comments on the preliminary version of this paper. We also thank the referee for several helpful suggestions.

{\small
	\bibliography{refs}
	\bibliographystyle{plain}
}
\end{document}